\documentclass{article}

\usepackage{amsfonts}
\usepackage{amsmath}
\usepackage{amssymb}
\usepackage{amsthm}
\usepackage{graphicx}
\usepackage{url}
\usepackage{arydshln}
\usepackage[latin1]{inputenc}
\usepackage[numbers,square,sort&compress]{natbib}

%
\renewcommand{\S}[1]{\ensuremath{\mathcal{S}_{#1}}} 


\newcommand{\Sav}[2]{\ensuremath{\mathcal{S}_{#1}(#2)}} 

%
\newcommand{\sav}[2]{\ensuremath{s_{#1}(#2)}} 

\newcommand{\Sp}[3]{\ensuremath{\mathcal{S}_{#1}(#2)[#3]}}
\renewcommand{\sp}[3]{\ensuremath{s_{#1}(#2)[#3]}}
\newcommand{\Spt}[4]{\ensuremath{\mathcal{S}_{#1}(#2)[#3;#4]}}
\newcommand{\spt}[4]{\ensuremath{s_{#1}(#2)[#3;#4]}}

   \newcommand{\oi}{\sim}
   \newcommand{\we}{\equiv}



\newcommand{\red}{\mathrm{red}}


\newtheorem{theorem}{Theorem}
\newtheorem{definition}[theorem]{Definition}
\newtheorem{conjecture}[theorem]{Conjecture}
\newtheorem{lemma}[theorem]{Lemma}

\newtheorem{proposition}[theorem]{Proposition}
\newtheorem{algorithm}[theorem]{Algorithm}



\renewcommand{\d}{\makebox[1.1ex]{\rule[.58ex]{.71ex}{.15ex}}}
\newcommand{\dcap}{\!\!-\!\!}
\newcommand{\n}{\!\bullet\!}


\title{Enumeration schemes for vincular patterns}
\author{Andrew M. Baxter \and Lara K. Pudwell}

\begin{document}
\maketitle

\abstract{
We extend the notion of an enumeration scheme developed by Zeilberger and Vatter to the case of vincular patterns (also called ``generalized patterns" or ``dashed patterns").  In particular we provide an algorithm which takes in as input a set $B$ of vincular patterns and search parameters and returns a recurrence (called a ``scheme") to compute the number of permutations of length $n$ avoiding $B$ or confirmation that no such scheme exists within the search parameters.  We also prove that if $B$ contains only consecutive patterns and patterns of the form $\sigma_1\sigma_2 \dotsm \sigma_{t-1}\d\sigma_t$, then such a scheme must exist and provide the relevant search parameters.  The algorithms are implemented in Maple and we provide empirical data on the number of small pattern sets admitting schemes.  We make several conjectures on Wilf-classification based on this data.  We also outline how to refine schemes to compute the number of $B$-avoiding permutations of length $n$ with $k$ inversions.
}

\section{Introduction}

Enumeration schemes are special recurrences which were originally designed to compute the number of permutations avoiding a set of classical patterns.  In the current work we extend the tools of enumeration schemes to compute the number of permutations avoiding a set of \emph{vincular} patterns.

Let $[n]=\{1,2,\dotsc,n\}$.  For a word $w \in [n]^k$, we write  $w=w_1 w_2 \dotsm w_k$ and define the \emph{reduction} $\red(w)$ to be the word obtained by replacing the $i^{th}$ smallest letter(s) of $w$ with $i$.  For example $\red(839183)=324132$.  If $\red(u)=\red(w)$, we say that $u$ and $w$ are \emph{order-isomorphic} and write $u\oi w$.
 
Let $\S{n}$ be the set of permutations of length $n$.  We say that permutation $\pi\in\S{n}$ \emph{contains $\sigma\in\S{k}$ as a classical pattern} if there is some $k$-tuple $1\leq i_1 < i_2 <\dotsb <i_k\leq n$ such that $\red(\pi_{i_1} \pi_{i_2} \dotsm \pi_{i_k}) = \sigma$.  The subsequence $\pi_{i_1} \pi_{i_2} \dotsm \pi_{i_k}$ is called a \emph{copy} (or \emph{occurrence}) of $\sigma$.  If $\pi$ does not contain $\sigma$, then $\pi$ is said to \emph{avoid} $\sigma$.  Hence we see that $\pi=34512$ contains $231$ as a classical pattern witnessed by the subsequence $\pi_1 \pi_3 \pi_4 = 351$, but exhaustive checking shows $\pi$ avoids 132.  The subset of $\S{n}$ consisting of permutations avoiding $\sigma$ is denoted $\Sav{n}{\sigma}$.  For a set of patterns $B$, $\pi$ is said to avoid $B$ if $\pi$ avoids all $\sigma\in B$, and we denote the set of $B$-avoiding permutations by 
 \begin{equation}
  \Sav{n}{B} := \bigcap_{\sigma\in B} \Sav{n}{\sigma}.
 \end{equation}
We will denote the size of $\Sav{n}{B}$ by $\sav{n}{B} = \bigl| \Sav{n}{B} \bigr|$.

   Vincular patterns resemble classical patterns, with the constraint that some of the letters in a copy must be consecutive.  Formally, a \emph{vincular pattern of length $k$} is a pair $(\sigma, X)$ where $\sigma$ is a permutation in $\S{k}$ and $X\subseteq \{0,1,2,\dotsc, k\}$ is a set of ``adjacencies.''   A permutation $\pi\in\S{n}$ contains the vincular pattern $(\sigma, X)$ if there is a $k$-tuple $1\leq i_1 < i_2 < \dotsb < i_k\leq n$ such that the following three criteria are satisfied:    
   \begin{itemize}
     \item $\red(\pi_{i_1} \pi_{i_2} \dotsm \pi_{i_k}) = \sigma$.
     \item $i_{x+1} = i_x +1$ for each $x\in X\setminus\{0,k\}$.
     \item $i_1=1$ if $0\in X$ and $i_k=n$ if $k\in X$.
   \end{itemize}
In the present work we restrict our attention to patterns $(\sigma, X)$ where $\sigma\in\S{k}$ and $X\subseteq [k-1]$, rendering the third containment criterion moot.\footnote{We enact this restriction partly for simplicity.  It is plausible that the prefix-focused arguments below extend to patterns $(\sigma, X)$ with $0\in X$ with few modifications, but it is unlikely such an approach could work if $k \in X$.}
The subsequence $\pi_{i_1} \pi_{i_2} \dotsm \pi_{i_k}$ is called a \emph{copy} of $(\sigma, X)$.  In the permutation $\pi= 162534$, the subsequence $1253$ is a copy of $(1243, \{3\})$, but the subsequence $1254$ is not a copy since the 5 and 4 are not adjacent in $\pi$.  The classical pattern $\sigma$ is precisely the vincular pattern $(\sigma, \emptyset)$ since no adjacencies are required, while the consecutive pattern $\sigma$ is the vincular pattern $(\sigma, [k-1])$ since all internal adjacencies are required.
 
   In practice we write $(\sigma, X)$ as a permutation with a dash between $\sigma_j$ and $\sigma_{j+1}$ if $j\not\in X$.  Thus we will often refer to ``the vincular pattern $\sigma$'' without explicitly referring to $X$.  For example, $(1243, \{3\})$ is written $1\d2\d43$.    
   
   If the permutation $\pi$ does not contain a copy of the vincular pattern $\sigma$, then $\pi$ is said to \emph{avoid} $\sigma$.  We will use the same notation $\Sav{n}{\sigma}$ to denote the set of permutations avoiding the vincular pattern $\sigma$, and similarly $\Sav{n}{B}$ denotes those permutations avoiding every vincular pattern $\sigma\in B$.

  Observe that a vincular pattern $(\sigma, X)$ of length $k$ exhibits similar symmetries to those of permutations.  The reverse is given by $(\sigma,X)^{r} = (\sigma^{r}, k-X)$ where $k-X = \{k-x: x\in X\}$.  The complement is $(\sigma,X)^{c}=(\sigma^{c},X)$.  It follows that that $\pi$ avoids $\sigma$ if and only if $\pi^{r}$ avoids $\sigma^{r}$.  Similarly, $\pi$ avoids $\sigma$ if and only if $\pi^{c}$ avoids $\sigma^{c}$.  To consider inverses, one must generalize to the bivincular patterns introduced in \cite{Bousquet2010} which incorporate adjacency restrictions on not only the indices of letters forming a forbidden pattern, but on the values of the offending letters as well.  Since the inverse of a vincular pattern is not itself a vincular pattern, we will disregard inverses.
    
   Vincular patterns were introduced as ``generalized patterns'' by Babson and Steingr\'{i}msson in \cite{Babson2000} as a generalization of classical patterns as part of a systematic search for Mahonian permutation statistics.  They soon took on a life of their own spawning numerous papers, including \cite{Claesson2001, Kitaev2003, Kitaev2005, Elizalde2006}.  They have also been called ``dashed patterns" to distinguish them from other generalizations of classical patterns \cite{Chen2011}, but Claesson has since dubbed them ``vincular patterns'' to connect them with the bivincular patterns introduced in \cite{Bousquet2010}.  See Steingr\'{i}mssson's survey for a fuller history in \cite{Steingrimsson2010Survey}.  They have been linked to many of the common combinatorial structures such as the Catalan and Bell numbers as well several rarer or as-yet unseen structures. 
    
  The present work focuses on enumeration schemes, which were introduced by Zeilberger in \cite{Zeilberger1998} as an automated method to compute $\sav{n}{B}$ for many different $B$.  Vatter improved schemes in \cite{Vatter2008} with the introduction of gap vectors, and Zeilberger provided an alternate implementation in \cite{Zeilberger2006}.  The greatest feature of schemes is that they may be discovered automatically by a computer: the user need only input the set $B$ (along with bounds to the computer search) and the computer will return an enumeration scheme (if one exists within the bounds of the search) which computes $s_n(B)$ in polynomial time.  The second author extended these methods to consider pattern avoidance in permutations of a multiset in \cite{Pudwell2010a, Pudwell2008}, as well as barred-pattern avoidance in \cite{Pudwell2010b}.
    
Section \ref{sec:SchemeSummary} provides an overview of how enumeration schemes work and constructs a scheme for $23\d1$-avoiding permutations by hand.  Section \ref{sec:AutoDisc} outlines how the discovery of schemes can be done via a finite computer search.  Section \ref{sec:SpecialCases} demonstrates instances where we are guaranteed a successful search for a scheme.  Section \ref{sec:Known} is divided into three subsections.  The first provides an analysis of the algorithm's success rate in discovering schemes automatically, the second outlines the implications for Wilf-classification of vincular patterns, and the third gives an example of how enumeration schemes for vincular patterns may be adapted to count according to inversion number as per \cite{Baxter2010}.

\section{An Overview of Enumeration Schemes}\label{sec:SchemeSummary}

Broadly, enumeration schemes are succinct encodings for a system of recurrence relations to compute the cardinalities for a family of sets.  The enumerated sets are subsets of $\Sav{n}{B}$ determined by prefixes.  For a pattern $p\in \S{k}$, let $\Sp{n}{B}{p}$ be the set of permutations $\pi\in \Sav{n}{B}$ such that $\red(\pi_1 \pi_2 \ldots \pi_k)= p$.  For further refinement, let $w\in \{1,2,\dotsc,n\}^k$ and define the set 
\begin{equation*}
 \Spt{n}{B}{p}{w} = \{\pi\in \Sp{n}{B}{p}  \colon\, \pi_i = w_i \mathrm{\ for\ } 1\leq i\leq k\}.
\end{equation*}
 For an example, consider
\begin{equation*}
 \Spt{5}{1\d2\d3}{21}{53} = \{ 53142, 53214, 53241, 53412, 53421 \}.
\end{equation*}
Clearly this refinement is worthwhile only when $\red(w)=p$.  The redundancy of including $p$ in the $\Spt{n}{B}{p}{w}$ notation is maintained to emphasize the subset relation.  We will denote sizes of these sets by $\sp{n}{B}{p} = \bigl| \Sp{n}{B}{p} \bigr|$ and $\spt{n}{B}{p}{w} = \bigl| \Spt{n}{B}{p}{w} \bigr|$.

By looking at the prefix of a permutation, one can identify likely ``trouble spots'' where forbidden patterns may appear.  For example, suppose we wish to avoid the pattern $23\d1$.  Then the presence of the pattern $12$ in the prefix indicates the potential for the entire permutation to contain a $23\d1$ pattern.  In \cite{Vatter2008}, Vatter partitions $\Sav{n}{B}$ according to the inverse notion of the pattern formed by the \emph{smallest} $k$ letters in $\pi\in \Sav{n}{B}$.  This partition is not well-suited for keeping track of adjacencies.

Enumeration schemes take a divide-and-conquer approach to enumeration.  For a permutation $p\in\S{k}$, we say that $p'\in\S{k+1}$ is a \emph{child} of $p$ if $p'_1 p'_2\cdots p'_k \oi p$.  For example, the children of $p=312$ are $3124$, $4123$, $4132$, and  $4231$.  Any set $\Sp{n}{B}{p}$ for $p\in \S{k}$ may be partitioned into the family of sets $\Sp{n}{B}{p'}$ for each of the children $p' \in \Sp{k+1}{B}{p}$.  These smaller sets are then counted as described below, and their sizes are totaled to obtain $\sp{n}{B}{p}$.  In the end we will have computed $\sav{n}{B}$, since $\Sav{n}{B}= \Sp{n}{B}{\epsilon} = \Sp{n}{B}{1}$ for $n\geq 1$, where $\epsilon$ is the empty permutation.

For a prefix pattern $p\in \S{k}$, we will classify $\Sp{n}{B}{p}$ in one of three ways:

\begin{enumerate}
   \item[(1)] If $n=k$, then $\Sp{n}{B}{p}$ is either $\{p\}$ or $\emptyset$, depending on whether $p$ avoids $B$.  
   \item[(2)] For each $w \in [n]^k$ such that $\red(w)=p$ one of the following happens:
     \begin{itemize}
      \item[(2a)] $\Spt{n}{B}{p}{w}$ is empty, or
      \item[(2b)] $\Spt{n}{B}{p}{w}$ is in bijection with some $\Spt{{n'}}{B}{{p'}}{{w'}}$ for ${n'}<n$.
     \end{itemize}
   \item[(3)] $\Sp{n}{B}{p}$ must be partitioned further, so $\sp{n}{B}{p} = \sum\limits_{p'\in \Sp{k+1}{B}{p}} \sp{n}{B}{p'}$.
\end{enumerate}

Case (1) provides the base cases for our recurrence.  For case (2), if there is any $w$ for which neither (2a) nor (2b) holds then we must divide $\Sp{n}{B}{p}$ as in case (3).  For case 2a, the \emph{gap vector criteria} for the given $p$ identify which $w$ yield empty $\Spt{n}{B}{p}{w}$.  Gap vector criteria are developed in Subsection \ref{gv}.  The bijection in (2b) is performed by removing a certain subset of the first $k$ letters of $\pi\in \Spt{n}{B}{p}{w}$, and which subset may be ``nicely'' removed depends on $p$ and $B$ but not $w$.  Such subsets are called \emph{reversibly deletable}, and are developed in Subsection \ref{rd}.

%
%
 %

\subsection{Gap Vectors}\label{gv}

The motivation for gap vectors lies in the idea of ``vertical space'' (in the sense of the graph of a permuation) in a prefix $w$.  Sometimes the difference of the values of letters in the prefix is so great that a forbidden pattern \emph{must} appear.   To make this notion more precise, we follow our example above and compute $\sav{n}{B}$ for $B=\{23\d1\}$.  Observe that $\Spt{n}{B}{12}{w_1 w_2}$ is empty if $w_1> 1$, since if $\pi \in \Spt{n}{B}{12}{w_1 w_2}$ then $\pi_i=1$ for some $i\geq 3$.  Thus $\red(w_1 w_2 \pi_i)=231$ and so $\pi$ contains $23\d1$.  Hence $\Spt{n}{B}{12}{w_1 w_2}$ is non-empty only if $1=w_1<w_2\leq n$.

For a word $w\in [n]^{k}$, let $c_i$ be the $i^{th}$ smallest letter of $w$, $c_0=0$ and $c_{k+1}=n+1$.  Define the $(k+1)$-vector $\vec{g}(n,w)$ such that $i^{th}$ component $g_i = c_i - c_{i-1} -1$.  We call $\vec{g}(n,w)$ the \emph{spacing vector} of $w$. 
 Observe that for any $\pi\in \Spt{n}{B}{p}{w}$, $g_i$ is the number of letters in $\pi_{k+1}, \pi_{k+2}, \ldots, \pi_{n}$ which lie between $c_{i-1}$ and $c_i$, and so $\vec{g}(n,w)$ indicates what letters follow the prefix.  In the preceding paragraph, we saw that $\Spt{n}{B}{12}{w}$ is empty if $\vec{g}(n,w) \geq \langle 1,0,0 \rangle$ where $\geq$ represents the product order for $\mathbb{N}^3$ (component-wise dominance).  Towards generality, we make the following definition:

\begin{definition}
 Given a set of forbidden patterns $B$ and prefix $p\in\S{k}$, then $\vec{v}\in\mathbb{N}^{k+1}$ is a \emph{gap vector for prefix $p$ with respect to $B$} if for all $n$ $\Spt{n}{B}{p}{w}=\emptyset$ for all $w$ such that $\vec{g}(n,w)\geq \vec{v}$.  When this happens, we say that $w$ \emph{satisfies} the \emph{gap vector criterion} for $\vec{v}$.  
 \end{definition}

It should be noted that this definition reverses the terminology of \cite{Vatter2008} to match that of \cite{Zeilberger2006, Pudwell2010a, Pudwell2008, Pudwell2010b, Baxter2010}.


From this definition we see $\vec{v}=\langle 1,0,0 \rangle$ is a gap vector for $p=12$ with respect to $B=\{23\d1\}$, and any prefix $w=
w_1 w_2$ with $1<w_1<w_2\leq n$ satisfies the gap vector condition for $\vec{v}$.

Observe that gap vectors for a given prefix $p\in \S{k}$ form an upper order ideal in $\mathbb{N}^{k+1}$, i.e., if $\vec{u}\geq\vec{v}$ for gap vector $\vec{v}$ then $\vec{u}$ is also a gap vector.  Hence it suffices to determine only the minimal elements since they will form a basis (since $\mathbb{N}^{k+1}$ is partially well-ordered).  Details of the automated discovery of gap vectors are left to Section \ref{sec:AutoDiscGV}.

\subsection{Reversible Deletions}\label{rd}

If we are considering $\Spt{n}{B}{p}{w}$ for a $w$ that fails all gap vector criteria, we rely on bijections with previously-computed $\Spt{{n'}}{B}{{p'}}{{w'}}$ for ${n'}<n$.  To continue the example above, consider $\Spt{n}{23\d1}{12}{1w_2}$.  An initial $\pi_1=1$ cannot take part in a $23\d1$ pattern, so the map of deleting $\pi_1$ is a bijection 
\begin{equation*}
  d_1: \Spt{n}{23\d1}{12}{1w_2} \to \Spt{n-1}{23\d1}{1}{w_2-1}
\end{equation*}
where $d_1: \pi_1 \pi_2 \cdots \pi_n \mapsto \red(\pi_2 \pi_3 \cdots \pi_n)$.  Hence we see that $\spt{n}{23\d1}{12}{1w_2} = \spt{n-1}{23\d1}{1}{w_2-1}$.

More generally define the deletion $d_r(\pi) := \red(\pi_1 \ldots \pi_{r-1} \pi_{r+1} \ldots \pi_n)$, that is, the permutation obtained by omitting the $r^{th}$ letter of $\pi$ and reducing.  Furthermore for a set $R$, define $d_R(\pi)$ to be the permutation obtained by deleting $\pi_r$ for each $r\in R$ and then reducing.  For a word $w$ with no repeated letters, define $d_r(w)$ be the word obtained by deleting the $r^{th}$ letter and then subtracting 1 from each letter larger than $w_r$.  Similarly, to construct $d_R(w)$ delete $w_r$ for each $r\in R$ and subtract $\bigl|\{r\in R: w_r < w_i\}\bigr|$ from $w_i$.  For example $d_3(6348) =537$ and $d_{\{1,3\}}(6348) = 36$.  It can be seen that this definition is equivalent to the one given above when $w\in\S{k}$, and it allows for more succinct notation in the upcoming definition.  
For any set $R$ and $n\geq |R|$, ${d_R: \Spt{n}{\emptyset}{p}{w} \to \Spt{n-|R|}{\emptyset}{d_R(p)}{d_R(w)}}$ is a bijection.  Sometimes we are lucky and the restriction to $\Spt{n}{B}{p}{w}$ is a bijection with $\Spt{n-|R|}{B}{d_R(p)}{d_R(w)}$, leading to the following definition:

\begin{definition}
The index $r$ is \emph{reversibly deletable for $p$ with respect to $B$} if the map
 $$d_r: \Spt{n}{B}{p}{w} \to \Spt{n-1}{B}{d_r(p)}{d_r(w)} $$
is a bijection for all $w$ failing the gap vector criterion for every gap vector of $p$ with respect to $B$ (i.e., $d_r$ is a bijection for all $w$ such that $\Spt{n}{B}{p}{w}\neq \emptyset$).

The set of indices $R$ is \emph{reversibly deletable for $p$ with respect to $B$} if the map 
$$d_R: \Spt{n}{B}{p}{w} \to \Spt{n-|R|}{B}{d_R(p)}{d_R(w)} $$
is a bijection for all $w$ failing the gap vector criterion for every gap vector of $p$ with respect to $B$ (i.e., $d_R$ is a bijection for all $w$ such that $\Spt{n}{B}{p}{w}\neq \emptyset$).
\end{definition}

Note that the empty set $R=\emptyset$ is always reversibly deletable.  We are interested in finding non-empty reversibly deletable sets when they exist.  Also observe that if the prefix $p$ contains a forbidden pattern then $\Spt{n}{B}{p}{w}=\emptyset$ for any appropriate $w$, and so $\vec{0}=\langle 0,0,\ldots, 0 \rangle$ is a gap vector.  Furthermore if $\vec{0}$ is a gap vector then any set $R\subseteq \{1,2,\dotsc, |p|\}$ is vacuously reversibly deletable.

In \cite{Vatter2008} Vatter uses the term \emph{ES$^{+}$-reducible} to describe $p$ for which there is a non-empty reversibly deletable set.  When there is no non-empty reversibly deletable set, then $p$ is called \emph{ES$^{+}$-irreducible}.  We will not make use of this terminology in the current work.

In the classical case, Vatter proved that identifying reversibly deletable indices is a finite process in \cite{Vatter2008}.  We will prove the analogous result for vincular patterns in Section \ref{AutoDiscRD}.

\subsection{Formal Definition of an Enumeration Scheme}\label{sec:formal}

Formally, an enumeration scheme $E$ for $\Sav{n}{B}$ is a set of triples $(p, G_p, R_p)$, where $p\in \S{k}$ is a prefix pattern, $G_p$ is a basis of gap vectors for $p$ with respect to $B$, and $R_p$ is a reversibly deletable set for $p$ with respect to $B$.  Furthermore, $E$ must satisfy the following criteria:

\begin{enumerate}
 \item $(\epsilon, \emptyset, \emptyset)\in E$.
 \item For any $(p,G_p,R_p) \in E$,
  \begin{enumerate}
   \item If $R_p=\emptyset$ and $\vec{0}\notin G_p$, then $(p', G_{p'}, R_{p'}) \in E$ for every child $p'$ of $p$.
   \item If $R_p\neq\emptyset$, then $(\hat{p}, G_{\hat{p}}, R_{\hat{p}})\in E$ for $\hat{p}=d_{R_p}(p)$.
  \end{enumerate}
\end{enumerate}

One can then ``read'' the enumeration scheme $E$ to compute $\spt{n}{B}{p}{w}$ according to the following rules:

\begin{enumerate}
  \item If $w$ satisfies the gap vector criteria for some $\vec{v}\in G_p$, then $\spt{n}{B}{p}{w}=0$.
  \item For each prefix $w$ that fails the gap criteria for all $\vec{v}\in G_p$, $\spt{n}{B}{p}{w} = \spt{n-|R_p|}{B}{d_{R_p}(p)}{d_{R_p}(w)}$ (i.e., $R_p$ is a reversibly deletable set of indices).
  \item If $R_p=\emptyset$ then $\sp{n}{B}{p} = \sum\limits_{p'\in \Sp{k+1}{B}{p}} \sp{n}{B}{p'}$.
\end{enumerate}

When combined with the obvious initial condition that $\spt{n}{B}{p}{w}=1$ when $p$ has length $n$ and avoids $B$, the scheme provides a system of recurrences to compute $\spt{n}{B}{p}{w}$ and hence $\sav{n}{B}$.

As an example consider $\Sav{n}{23\d1}$, discussed above, with the enumeration scheme:
\begin{equation}\label{ES:23-1}
 E=\{(\epsilon, \emptyset, \emptyset), (1, \emptyset, \emptyset), (12, \{\langle 1, 0, 0 \rangle\}, \{1\}), (21, \emptyset, \{1\})\}.
\end{equation}
The definition of schemes implies $(\epsilon, \emptyset, \emptyset)\in E$, and  $R_\epsilon=\emptyset$ requires $(1, G_1, R_1)\in E$.  Starting with the pattern 1 yields no additional information so $G_1$ and $R_1$ are both empty.  Thus we see $(12, G_{12}, R_{12})\in E$ and $(21, G_{21}, R_{21})\in E$.  As discussed in Section \ref{gv}, $\langle 1,0,0 \rangle \in G_{12}$.  It is easily seen this forms a basis for all gap vectors for $12$ and so $G_{12}=\{\langle 1,0,0 \rangle\}$.  As discussed in Section \ref{rd}, $R_{12}=\{1\}$.

Moving on to the prefix pattern $p=21$, it can be seen that  $R_{21}=\{1\}$ by the following argument.  Suppose $\pi \in \Sp{n}{\emptyset}{21}$.  First observe that deleting $\pi_1$ cannot create a $23\d1$ which was not already present in $\pi_2 \cdots \pi_n$.  Next $\pi_1$ cannot take part in a $23\d1$ pattern since this would require $\red(\pi_1 \pi_2 ) = \red(23)=12$ while it is known that $\pi_1 > \pi_2$.  Hence the map $d_1$ restricts to a bijection $\Sp{n}{23\d1}{21}  \leftrightarrow \Sp{n-1}{23\d1}{1}$, so we may let $R_{21}=\{1\}$.  Since this argument can hold regardless of the actual letters $\pi_1$ and $\pi_2$, we may let $G_{21}=\emptyset$.  This completes the construction of $E$ above.

The scheme $E$ translates into the following system of recurrences:
\begin{equation*}
 \begin{split}
 \sav{n}{23\d1} &= \sp{n}{23\d1}{\epsilon} \\
               &= \sp{n}{23\d1}{1} \\
               &= \sum_{a=1}^{n} \spt{n}{23\d1}{1}{a} \\
\spt{n}{23\d1}{1}{a}  &= \sum_{b=1}^{a-1} \spt{n}{23\d1}{21}{ab} + \sum_{b=a+1}^{n} \spt{n}{23\d1}{12}{ab} \\
\spt{n}{23\d1}{21}{ab} &= \spt{n-1}{23\d1}{1}{b} \\
\spt{n}{23\d1}{12}{ab} &= 
 \begin{cases}
  \spt{n-1}{23\d1}{1}{b-1}, & a=1 \\
        0 ,                 & a>1
  \end{cases}
 \end{split}
\end{equation*}
               
This system simplifies to:
\begin{equation*}
  \spt{n}{23\d1}{1}{a} = 
  \begin{cases}
   \sum\limits_{b=1}^{n-1} \spt{n-1}{23\d1}{1}{b}, & a=1\\
   \sum\limits_{b=1}^{a-1} \spt{n-1}{23\d1}{1}{b}, & 1<a\leq n. \\
  \end{cases}
\end{equation*}
which can be used to compute arbitrarily many terms of the sequence $\sav{n}{23\d1}$ in polynomial time. 

Claesson shows in Proposition 3 of \cite{Claesson2001} that $\sav{n}{23\d1}$ is the $n^{th}$ Bell number, and his bijection also implies that $\spt{n}{23\d1}{1}{a}$ is the number of permutations of $[n]$ such that $a$ is the largest letter in the same block as 1.  The triangle formed is an augmented version of Aitken's array as described in OEIS sequence A095149 \cite{A095149}.

If $|E|$ is finite, we say that $B$ admits a finite enumeration scheme.  The length of the longest $p$ appearing is the called the \emph{depth} of $E$.  Not every set $B$ admits a finite enumeration scheme, the simplest example being the classical pattern $2\d3\d1$.  Let $E_{231}$ be the scheme for $\Sav{n}{2\d3\d1}$, and let $J_t = t(t-1)\cdots 21$ be the decreasing permutation of length $t$.  It can be shown that $G_{J_t}= \emptyset$ and $R_{J_t}=\emptyset$ for any $t$, and hence $E_{231}$ contains the triple $(J_t, \emptyset, \emptyset)$ for all $t\geq 1$ and hence is infinite.  It should be noted, however, that the enumeration scheme for $\Sav{n}{1\d3\d2}$ is finite (of depth 2) and $\sav{n}{2\d3\d1}=\sav{n}{1\d3\d2}$ by symmetry.

  In general, if $B$ admits an enumeration scheme $E_B$ of depth $d$ then its set of complements $B^c = \{\sigma^c: \sigma \in B\}$ also admits an enumeration scheme $E_{B^c}$ of depth $d$.  In fact, one can say $(p, R, G) \in E_B$ if and only if $(p^c, R, G^r)\in E_{B^c}$ where $G^r = \{ \langle g_{k+1}, g_{k}, \ldots, g_1\rangle: \langle g_1, g_2, \ldots, g_{k+1} \rangle \in G\}$.  This follows directly from the definitions given above and is left to the reader.  One cannot make analogous statements regarding $B^r = \{\sigma^r: \sigma\in B\}$, and so $B$ may not have a finite scheme while $B^r$ does.

\section{Automated Discovery}\label{sec:AutoDisc}

We now turn to the process of automating the discovery of enumeration schemes for vincular patterns, since this automation is the most outstanding feature of this method.  The overall algorithm proceeds as follows:

\begin{algorithm}\label{ESalg} \ \ 

\begin{enumerate}
  \item Initialize $E := \{ (\epsilon, \emptyset, \emptyset) \}$
  \item Let $P$ be the set of all children of all prefixes $p$ such that $(p,G_p,R_p)\in E$ and $R_p = \emptyset$ and $\vec{0}\notin G_p$.  If there are no such prefixes, return $E$.  Otherwise proceed to step 3.
  \item For each $p\in P$, find a basis of gap vectors $G_p$.
  \item For each $p\in P$, find a non-empty reversibly deletable set of indices $R_p$ given the gap vector criteria in $G_p$.  If no such $R_p$ exists, let $R_p=\emptyset$.
  \item Let $E= E \cup \{(p, G_p, R_p): p\in P\}$.
  \item Return to step 2.
\end{enumerate}
\end{algorithm}

Steps 1, 2, 5, and 6 are routine computations for a computer algebra system.  In the following subsections we present algorithms to automate steps 3 and 4.

\subsection{Gap Vectors}\label{sec:AutoDiscGV}

We first look at automating step 3 of Algorithm \ref{ESalg}.  As mentioned previously, the set of gap vectors forms an order ideal in $\mathbb{N}^{k+1}$ and therefore it suffices to find a finite basis of minimal gap vectors.  In this section we present a method to test whether a given $\vec{v}$ is a gap vector by checking finitely many permutations for pattern containment.  

The approach mimics that of \cite{Zeilberger2006}, rather than \cite{Vatter2008} where Vatter presents a stronger notion of gap vector which yields an \emph{a priori} bound on the set of vectors to check.  Define the norm of a vector $\vec{v}=\langle v_1, \dotsc, v_{k+1}\rangle$  to be the sum of its components, $|\vec{v}|= v_1 + \dotsb + v_{k+1}$.   In Vatter's notion of gap vector, if $\vec{v}$ is a basis gap vector then $|\vec{v}| \leq \max \{|\sigma|: \sigma \in B\}-1$.  In Zeilberger's method in \cite{Zeilberger2006}, the maximum norm of basis gap vectors is entered by the user as a parameter of the algorithm.

We choose the method of \cite{Zeilberger2006} since its implementation allows the user more control over runtime via a parameter that lets us set the maximum allowed gap norm.  This speeds computation time since it reduces the candidate pool for putative gap vectors, but this is at the cost of missing gap vectors which could make the enumeration scheme finite.  For example, it is shown in \cite{Vatter2008} that there is no finite enumeration scheme for the forbidden set $B=\{1\d4\d2\d3, 1\d4\d3\d2\}$ using only gap vectors of norm 1.    On the other hand, there is a depth 7 scheme for $1\d2\d3\d4\d5$-avoiding permutations which has maximum basis gap vector norm 1 instead of the a priori bound of 4.  A search for a depth 7 scheme with maximum gap vector norm 4 is impractical with the current implementation.  A search for a depth 7 scheme with maximum gap vector norm 1, however, completes in under five minutes with a finite scheme.
 
We now present a test for whether a specific vector $\vec{v}$ is a gap vector by checking finitely many cases.

  Given a set of forbidden patterns $B$, prefix $p\in\S{k}$, and vector $\vec{v}\in\mathbb{N}^{k+1}$,  define the set of permutations with prefix $p$ and spacing vector $\vec{v}$:
  \begin{equation*}
    A(p, \vec{v}) := \{\pi\in \S{|p|+|\vec{v}|}: \pi_1 \cdots \pi_k \oi p, \vec{g}(\pi_1 \cdots \pi_k) = \vec{v} \}.
  \end{equation*}
In this notation, $\vec{v}$ is a gap vector if $\vec{u}\geq \vec{v}$ implies that every $\pi\in A(p,\vec{u})$ contains some pattern in $B$.

Define the \emph{head} of a vincular pattern $(\sigma,X)$ to be the subpattern $(\red(\sigma_1\cdots\sigma_{\ell+1}), X)$ where $\ell=\max X$.  For example, the head of $(241652,\{2\})=2\d41\d6\d5\d3$ is $(\red(241), \{2\})=2\d31$.  The part of $\sigma$ following the head is a classical pattern, with dashes between every letter.

\begin{theorem}\label{thm:GapTest}
  Consider a prefix $p\in\S{k}$ and a spacing vector $\vec{v}\in\mathbb{N}^{k+1}$.  If every permutation $\pi\in A(p,\vec{v})$ contains a copy of some $\sigma\in B$ such that $\pi_1 \cdots \pi_k$ contains the head of the copy, then $\vec{v}$ is a gap vector.
\end{theorem}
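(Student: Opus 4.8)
The plan is to use the reformulation stated just before the theorem: $\vec{v}$ is a gap vector precisely when every $\pi \in A(p,\vec{u})$ contains a pattern of $B$ for all $\vec{u} \geq \vec{v}$. The hypothesis controls only $A(p,\vec{v})$ (finitely many permutations), so the entire content of the argument is to \emph{propagate} containment upward through the order ideal $\{\vec{u} : \vec{u} \geq \vec{v}\}$. I would do this by induction on the norm $|\vec{u}|$, and I would strengthen the inductive claim so that it carries the extra information the hypothesis supplies: \emph{for every $\vec{u} \geq \vec{v}$, every $\pi \in A(p,\vec{u})$ contains a copy of some $\sigma \in B$ whose head is contained in the prefix $\pi_1 \cdots \pi_k$.} The base case $\vec{u} = \vec{v}$ is exactly the hypothesis, and since $\vec{u} \geq \vec{v}$ with $|\vec{u}| = |\vec{v}|$ forces $\vec{u} = \vec{v}$, nothing else needs checking at minimal norm.

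For the inductive step, I would fix $\vec{u} > \vec{v}$ and choose a coordinate $i$ with $u_i > v_i \geq 0$, so that $u_i \geq 1$. Given $\rho \in A(p,\vec{u})$, the $i$-th gap then contains at least one of the tail letters $\rho_{k+1}, \dotsc, \rho_{|\rho|}$; I delete one such letter and reduce, obtaining $\pi$. Because only a tail letter is removed, the first $k$ letters retain their relative order, so $\pi_1 \cdots \pi_k \oi p$, the $i$-th gap drops by one, and a length count ($|\rho| - 1 = |p| + |\vec{u}| - 1$) gives $\pi \in A(p, \vec{u} - \vec{e}_i)$ with $\vec{u} - \vec{e}_i \geq \vec{v}$, where $\vec{e}_i$ is the $i$-th standard basis vector. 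By the inductive hypothesis $\pi$ contains a copy of some $\sigma \in B$ with its head among $\pi_1 \cdots \pi_k$. I would then lift this copy back along the strictly increasing, order-preserving correspondence $\phi$ that sends each surviving position of $\pi$ to its original position in $\rho$, observing that $\phi$ fixes the prefix positions $1, \dotsc, k$ since the deleted letter lay in a tail position. Order-preservation of $\phi$ ensures the lifted subsequence still reduces to $\sigma$, and because the head occupies prefix positions only, $\phi$ is the identity there, so the adjacencies required by $X$ among the head positions survive verbatim and the head of the lifted copy again lies in $\rho_1 \cdots \rho_k$. This yields the desired copy in $\rho$, closes the induction, and then the reformulation delivers that $\vec{v}$ is a gap vector.

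The conceptual crux, and the step I expect to need the most care, is exactly this preservation of the vincular adjacencies under the lift, which is where the ``head in the prefix'' hypothesis does all the work. Deleting tail letters can shift the absolute positions of the tail of a copy, so adjacency constraints located there could not be trusted in general; the point is that the head collects \emph{all} of the adjacency constraints of $(\sigma,X)$ — since $\ell = \max X$ and, as noted in the text, the portion of $\sigma$ after the head is a fully dashed classical pattern — and requiring the head to sit inside the untouched prefix pins those constrained positions down so that $\phi$ cannot disturb them. The tail, being classical, needs only its relative order preserved, which $\phi$ provides automatically. I would take care to state the correspondence $\phi$ precisely (the identity on $\{1,\dotsc,k\}$, strictly increasing on tail positions) and to confirm the bookkeeping identity $|p| + |\vec{u}| = |\rho|$, but these are routine once the adjacency argument is in place.
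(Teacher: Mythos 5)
Your proof is correct and takes essentially the same approach as the paper: both rest on the observation that deleting tail letters carries $A(p,\vec{u})$ down to $A(p,\vec{v})$, and that re-inserting letters after the prefix cannot destroy a copy whose head lies in the prefix, because the portion of the pattern beyond its head is fully dashed. The only difference is organizational --- the paper deletes all $u_i - v_i$ surplus letters from each gap in a single step and applies the hypothesis once to the resulting element of $A(p,\vec{v})$, whereas you peel off one letter at a time by induction on $|\vec{u}|$ with a correspondingly strengthened inductive claim.
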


\begin{proof}
We will demonstrate how to construct any permutation $\pi\in A(p,\vec{u})$ for $\vec{u}\geq \vec{v}$ from a $\pi'\in A(p,\vec{v})$ while preserving any copy of $\sigma \in B$ whose head lies in  $\pi'_1 \cdots \pi'_k$  .

  Let $\pi\in A(p,\vec{u})$ where $|p|=k$.  Let $c_i$ be the $i^{th}$ smallest letter in $\pi_1 \cdots \pi_k$ and let $c_0=0$ and $c_{k+1}=n+1$.  Define $C_i := \{\pi_j: j> k, c_{i-1}<\pi_j<c_i\}$ for $i\in [k+1]$, and observe that $u_i = \bigl| C_i \bigr|$.  For each $i$, choose $u_i - v_i$ letters of $C_i$, delete these letters from $\pi$, and reduce.  Note that the deleted letters all lie outside of the prefix $\pi_1 \cdots\pi_k$, so this process forms $\pi'\in A(p,\vec{v})$.  Reversing this process by re-inserting the letters provides the necessary construction of $\pi$ from $\pi'$.  By our hypothesis, $\pi'$ contains $\sigma\in B$ such that the head of $\sigma$ lies in the prefix $\pi'_1 \cdots \pi'_k$. Inserting letters after the prefix will not destroy this copy of $\sigma$ since the portion of $\sigma$ lying outside the head has no adjacency restrictions.  Hence $\pi\in A(p, \vec{u})$ also contains $\sigma$, and our result is proven.
\end{proof}

Note that $A(p, \vec{v})$ contains $|\vec{v}|!$ permutations, and each of these must be checked for $B$-containment.  Hence keeping $|\vec{v}|$ small is a significant computational advantage.

Note that the criterion that every permutation in $A(p,\vec{v})$ contains a copy of $\sigma\in B$ \emph{such that $\pi_1 \cdots \pi_k$ contains the head of the copy} is required.  For example, consider $B=\{124\d3, 123\d4\d5\}$.  Here $A(123,\langle 0,0,0,2\rangle) = \{12345, 12354 \}$, both of which contain a forbidden pattern although the copy of $124\d3$ contained in $12354$ does not have its head entirely in $p$.  Now observe that $234165\in A(123,\langle 1,0,0,2\rangle)$ avoids $B$ even though $\langle 1,0,0,2\rangle \geq \langle 0,0,0,2\rangle$: the inserted $1$ severs the copy of $124\d3$ without creating any other forbidden pattern.  

Note that Theorem \ref{thm:GapTest} provides a sufficient condition, but we do not prove necessity.  There may exist gap vectors $\vec{v}$ which do not satisfy the given criterion, but we have observed no such vectors in practice.

 In the computer implementation of this test, one must construct $A(p,\vec{v})$ explicitly.  This may be done by methods discussed in \cite{Zeilberger2006}.

As an example of applying Theorem \ref{thm:GapTest}, consider the $B=\{23\d1\}$-avoiding permutations, with prefix $p=12$ and suppose we search over all vectors with norm at most $2$.  Table \ref{tab:GapSearch} gives the relevant information for each of the ten candidates.

\begin{table}[htb]
 \centering
		\begin{tabular}{l|l|l}
			$\vec{v}$ & $A(12, \vec{v})$ & Gap vector? \\
			\hline
			$\langle 0,0,0 \rangle$ & $\{12\}$  & No \\
			$\langle 1,0,0 \rangle$ & $\{231\}$ & Yes \\
			$\langle 0,1,0 \rangle$ & $\{132\}$ & No \\
			$\langle 0,0,1 \rangle$ & $\{123\}$ & No \\
			$\langle 1,1,0 \rangle$ & $\{2413, 2431\}$ & Yes \\
			$\langle 1,0,1 \rangle$ & $\{2314, 2341\}$ & Yes \\
			$\langle 0,1,1 \rangle$ & $\{1324, 1342\}$ & No \\
      $\langle 2,0,0 \rangle$ & $\{3412, 3421\}$ & Yes \\
			$\langle 0,2,0 \rangle$ & $\{1423, 1432\}$ & No \\
			$\langle 0,0,2 \rangle$ & $\{1234, 1243\}$ & No
		\end{tabular}
		\caption{Computing gap vectors for $p=12$ with respect to $\{23\dcap1\}$}
		\label{tab:GapSearch}
\end{table}

Looking at the set of gap vectors determined $\{ \langle 1,0,0 \rangle, \langle 1,1,0\rangle, \langle 1,0,1\rangle, \langle 2,0,0\rangle \}$, we see the order ideal generated by these vectors has minimal basis $\{ \langle 1,0,0\rangle\}$.\footnote{The thoroughness of this example is perhaps misleading regarding the implementation.  Once the computer discovers that $\vec{v}$ is a gap vector, it need not bother testing any other $\vec{u}\geq\vec{v}$.}  Hence in the enumeration scheme we see $G_{12}=\{ \langle 1,0,0 \rangle\}$.

\subsection{Reversibly Deletable Sets}\label{AutoDiscRD}

We now turn our attention to automating step 4 of Algorithm \ref{ESalg}: discovering reversibly deletable sets of indices for a given prefix $p$.  Our scenarios-based approach parallels that of \cite{Zeilberger2006}.

Recall that for a set of indices $R$, the map $d_R$ deletes $\pi_r$ for each $r\in R$.  This forms a bijection $d_R: \Spt{n}{\emptyset}{p}{w} \to \Spt{n-|R|}{\emptyset}{d_R(p)}{d_R(w)}$, and when this map restricts to a bijection $\Spt{n}{B}{p}{w} \to \Spt{n-|R|}{B}{d_R(p)}{d_R(w)}$ we say that $R$ is \emph{reversibly deletable} for the prefix $p$.   In the classical case, the deletion of a letter or letters could not create a copy of a forbidden pattern.  For vincular patterns, however, deleting a letter may create the adjacency required to form an occurrence of a vincular pattern.  For example, $3142$ avoids $23\d1$ but $d_2(3142)=231$ does not since the 3 and 4 become adjacent to one another.  This does not preclude the existence of bijective maps $d_R$, it merely requires additional checks for the automated discovery.  In the end, a finite search for a reversibly deletable set suffices as in the classical case: it is only the manner in which we check each candidate which differs.  The need to check both directions of the map first appears in \cite{Pudwell2010b} when extending schemes for barred pattern avoidance.  The added twist needed for vincular pattern avoidance is the introduction of the ``null'' symbol $\,\n\,$.

Note $R$ is reversibly deletable when every $\pi\in\Spt{n}{\emptyset}{p}{w}$ avoids $B$ if and only $d_R(\pi)$ also avoids $B$.  Inversely, we could check whether every $\pi$ which contains some $\sigma\in B$ has image $d_R(\pi)$ which also contains some $\sigma'\in B$.  This approach was introduced by Zeilberger in \cite{Zeilberger2006} and used by the second author in \cite{Pudwell2010a, Pudwell2010b} when extending enumeration schemes to other contexts.

Let us illustrate the approach via an example before moving to the general case.  Consider $B = \{124\d3\}$ and prefix $p=132$.  We ask ``which letters of the prefix can participate in a $\sigma=124\d3$ pattern?''.  Suppose $\pi$ is a permutation with prefix pattern $132$, and that at least one letter of the prefix is part of a copy of $\sigma$.  If $\pi$ has minimal length, then $\pi$ must have the form $\red(132abc)$ where $a,b,c\in\mathbb{Q}$ such that $2abc\oi \sigma$: $\sigma$ starts with two rises and the descent $32$ in the prefix prevents a $\sigma$ from starting earlier.  There are four such permutations: $13\underline{246}\underline{5}$, $14\underline{236}\underline{5}$, $15\underline{236}\underline{4}$, $16\underline{235}\underline{4}$ (the occurence of $\sigma$ is underlined in each).  It will be necessary to keep track of where and when dashes in the contained copy of $\sigma$ appear outside of the prefix, which we denote with the ``null'' symbol $\,\n\,$.  This special character denotes the possibility for intervening letters but cannot participate in patterns itself.  Thus we write these four permutations as $13\underline{246}\n\underline{5}$,$14\underline{236}\n\underline{5}$, $15\underline{236}\n\underline{4}$, $16\underline{235}\n\underline{4}$.  Denote this set of \emph{containment scenarios} for $p=132$ by $A_{132}$.  We now apply $d_R$ for each $R\subseteq [3]$ and check whether the images under $d_R$ each contain $\sigma$.  This is done in Table \ref{ScenariosTable1}.  If $d_R(\pi)$ contains $\sigma$ for each $\pi\in A_{132}$, then $R$ passes the first test\footnote{In the classical case, this was the \emph{only} test.} for reverse deletability: the insertion $d_R^{-1}$ does not create any forbidden patterns in $B$ when applied to a permutation which already avoids $B$.  Looking across the rows of Table \ref{ScenariosTable1}, we see that $\{1\}$, $\{2\}$, and $\{1,2\}$ pass this test since every permutation in those rows contains $\sigma$.

\begin{table}[htb]
	\centering
		\begin{tabular}{r|cccc}
$\pi\in A_{132}$ & $13246\n5$ & $14236\n5$ & $15236\n4$ & $16235\n4$ \\
\hline
$d_{\{1\}}(\pi)$ & $2135\n4$ & $3125\n4$ & $4125\n3$ & $5124\n3$    \\
$d_{\{2\}}(\pi)$ & $1235\n4$ & $1235\n4$ & $1235\n4$ & $1235\n4$    \\
$d_{\{3\}}(\pi)$ & $1235\n4$ & $1325\n4$ & $1425\n3$ & $1524\n3$    \\
$d_{\{1,2\}}(\pi)$ & $124\n3$ & $124\n3$ & $124\n3$ & $124\n3$      \\
$d_{\{1,3\}}(\pi)$ & $124\n3$ & $214\n3$ & $314\n2$ & $413\n2$      \\
$d_{\{2,3\}}(\pi)$ & $124\n3$ & $124\n3$ & $134\n2$ & $143\n2$      \\
$d_{\{1,2,3\}}(\pi)$ & $13\n2$ & $13\n2$ & $13\n2$ & $13\n2$        \\
		\end{tabular}
	\caption{$d_R(\pi)$ for each $R\subseteq \{1,2,3\}$, $\pi\in A_{132}$.}
	\label{ScenariosTable1}
\end{table}

Since a deletion map creates new adjacencies and potentially a copy of $\sigma$, there is a second test that $R$ must pass to be reversibly deletable.  Consider $R=\{2\}$, which passed the previous test.  Applying $d_{\{2\}}$ to a permutation with prefix pattern $132$ will create a permutation with prefix patttern $12$, so we must consider the containment scenarios for the prefix $p=12$: $A_{12} = \{124\n3, 1235\n4\}$.  We then consider all ways each of these containment scenarios could have arisen by applying $d_{\{2\}}$ to a permutation with prefix patern $132$; i.e., every permutation of the form $\red(1a24\n3)$ for $a\in\{2+\tfrac{1}{2},3+\tfrac{1}{2}, 4+\tfrac{1}{2}\}$ or $\red(1b235\n4)$ for $b\in\{2+\tfrac{1}{2}, 3+\tfrac{1}{2}, 4+\tfrac{1}{2}, 5+\tfrac{1}{2} \}$.  In particular, this list includes $1325\n4$, which avoids $\sigma$ while $d_{\{2\}}(1325\n4)=124\n3$ contains $\sigma$.  Since one can use $d_{\{2\}}$ to create a $\sigma$-containing permutation from a $\sigma$-avoiding permutation, $R=\{2\}$ cannot be reversibly deletable.  On the other hand, for $R=\{1\}$ one can check that the containment scenarios for $d_{\{1\}}(132)=21$ are $A_{21}=\{{2135\n4}, {3125\n4}, {4125\n3}, {5124\n3} \}$ and that the permutations starting with $132$ which map to some $\pi\in A_{21}$ are precisely $\{ {13246\n5}, {14236\n5}, {15236\n4}, {16235\n4} \}$.
Since each of these pre-image permutations contains $\sigma$, $R=\{1\}$ passes the second test for reversible deletability.  Hence $\{1\}$ is reversibly deletable.  Similarly, for $R=\{1,2\}$ we get only the containment scenario $A_1=\{124\n3\}$ and the same set of pre-images with prefix $132$:
\begin{equation*}
 d_{\{1,2\}} \left( \{13246\n5, 14236\n5, 15236\n4, 16235\n4 \} \right) = A_1.
\end{equation*}
 Again, each of the permutations on the lefthand side contains $\sigma$, so $R=\{1,2\}$ is reversibly deletable.  Hence we have two non-empty reversibly deletable sets for prefix $132$.  While either set will lead to a valid enumeration scheme, we follow a convention to choose the largest one and break ties lexicographically by the smallest elements.

To demonstrate a subtlety of containment scenarios, consider the forbidden set $B=\{3\d21, 32\d1\}$ and prefix $p=21$.  Here we see that we have the basis gap vector $\langle 1,0,0\rangle$, and so any permutation starting with prefix word $ab$ for $a>b>1$ necessarily contains a forbidden pattern.  Hence to prove $R$ is reversibly deletable, we only need to show $d_R$ is bijective starting from sets of the form $\Spt{n}{B}{21}{a1}$.  Therefore even though $42\n31$ contains a forbidden pattern and begins with $21$, we know that $\Spt{n}{B}{21}{42}=\emptyset$ and so we do not need to check whether $d_R(42\n31)$ contains a forbidden pattern.  In fact the only containment scenario worth checking for $p=21$ is $41\n32$.  Hence $R=\{2\}$ passes the first test for reversible deletability.  We then move on to consider the containment scenarios for prefix pattern $d_2(21)=1$.  These are $A_1 = \{3\n21, 32\n1\}$.  The pre-images under $d_2$ starting with $21$ include $413\d2$, however, which does not contain either forbidden pattern.  Hence $R=\{2\}$ fails the second test for reversible deletability.  If we had not kept track of dashes with the null character $\,\n\,$, however, the preimage $4132$ would have contained a forbidden pattern and $\{2\}$ would have appeared to be reversibly deletable.

We now outline in general the scenarios method to test whether a set $R$ is reversibly deletable for prefix $p$ with respect to forbidden pattern $B$.  We begin with a formal definition for a containment scenario.

\begin{definition}
 Let $(\sigma, X)$ be a vincular pattern of length $\ell$ and let $p\in\S{k}$ be a prefix pattern with known set of gap vectors $G$.  Let $w\in \{1,2,\ldots, n, \,\n\,\}^{n+m}$ be a word with $m$ copies of $\,\n\,$ and no other letters repeated.  Then $w$ is a \emph{containment scenario} for $p$ if the following criteria are satisfied:
 \begin{enumerate}
  \item $w_1 \cdots w_k \oi p$.  Note this implies $\n$ does not appear in the first $k$ letters.
  \item $w_1 \cdots w_k$ fails all gap vector criteria in $G$.
  \item There is some subsequence $1\leq i_1 < \cdots < i_{\ell} \leq n+m$ such that $w(i_1) \cdots w(i_{\ell}) \oi \sigma$ and $w(i_{x}+1)=\,\n\,$ for each $x$ such that $i_x\geq k$ and there is a dash between $\sigma_{x}$ and $\sigma_{x+1}$ (i.e., $x\not\in X$).
  \item No subsequence of $w$ is a containment scenario (i.e., $w$ has minimal length).
 \end{enumerate}
\end{definition}

The set of containment scenarios for a forbidden set $B$ is simply the union of the sets of containment scenarios for each $\sigma\in B$.  We will denote by $A_p$ the set of containment scenarios for a forbidden set $B$, a prefix $p$, and a set of gap vectors $G$.

One can compute $A_p$ via brute force over all $2^{|p|}-1$ nonempty subsequences of $p$.  A set of indices $1\leq i_1 < \cdots i_{t} \leq |p|$ is a \emph{partial match} for $(\sigma,X)\in B$ if $i_x+1 = i_{x+1}$ for each $x\in X$ and $p(i_1) p(i_2) \cdots p(i_{t}) \oi \sigma_1 \sigma_2 \cdots \sigma_{t}$.  Note that a set of indices may be a partial match for more than one pattern in $B$.  For each partial match of $(\sigma, X)$, insert the $|\sigma|-t$ letters and necessary number of $\n$ on the right end of $p$ in such a way to complete the occurrence of $\sigma$ using the letters in the partial match.  Repeating this process for each $(\sigma, X)\in B$ gives us the complete set of containment scenarios.  We may then throw out any containment scenarios whose first $|p|$ letters satisfy a gap vector criterion for some basis gap vector.

We now present the algorithm to check whether a given set $R\subseteq [k]$ is reversibly deletable for a prefix $p\in\S{k}$ with respect to the forbidden set $B$.

\begin{algorithm} \ \ 

 \begin{enumerate}
  \item Compute the set of containment scenarios $A_p$.
  \item For each $\pi\in A_p$, check if $d_R(\pi)$ contains a forbidden pattern in $B$.  If any $\pi$ avoids $B$, then $R$ is not reversibly deletable.  Otherwise, proceed to step 3.
  \item Compute the set of containment scenarios $A_{d_R(p)}$.
  \item Find the set of all permutations $\pi$ with prefix $p$ such that $d_R(\pi)\in A_{d_R(p)}$.  If any of these $\pi$ avoids $B$, then $R$ is not reversibly deletable.  If each of these contains some forbidden pattern, then $R$ is reversibly deletable.
 \end{enumerate}
\end{algorithm}


Therefore we can compute non-empty reversibly deletable sets automatically by a finite computer search.  This concludes our discussion on automated discovery of enumeration schemes.  These procedures have been implemented in the Maple package \textsc{gVatter}, available on the authors' homepages.

\section{Special Cases of Guaranteed Success}\label{sec:SpecialCases}

Knowing \textit{a priori} whether a set of patterns $B$ has a finite enumeration scheme remains an open question.  As a partial classification, we show here that if $B$ contains only consecutive (i.e., dashless) patterns and patterns of the form $\sigma_1 \sigma_2 \cdots \sigma_{t-1} \d \sigma_{t} = (\sigma, [t-2])$ then $B$ admits a finite enumeration scheme.  We will prove this via a series of lemmas regarding gap vectors and reversibly deletable sets.

Our first sequence of lemmas regards consecutive pattern avoidance.
\begin{lemma}\label{lem:cons1}
  Let $\sigma\in\S{t}$ and consider prefix pattern $p\in\S{k}$.  Then the set $\{1\}$ is reversibly deletable for prefix $p$ with respect to $\{(\sigma,[t-1])\}$ if $p_1 p_2 \cdots p_{\min(t,k)} \not\oi \sigma_1 \sigma_2 \cdots \sigma_{\min(t,k)}$.
\end{lemma}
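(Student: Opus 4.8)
The plan is to lean on the fact recorded just before the definition of reversible deletability, namely that the unrestricted deletion $d_1 \colon \Spt{n}{\emptyset}{p}{w} \to \Spt{n-1}{\emptyset}{d_1(p)}{d_1(w)}$ is always a bijection. Since $d_1$ is already bijective on the full prefix-classes, to conclude that it restricts to a bijection on the $B$-avoiding subclasses (here $B=\{(\sigma,[t-1])\}$) it suffices to prove, for \emph{every} permutation $\pi$ with $\red(\pi_1\cdots\pi_k)=p$, the equivalence that $\pi$ avoids $(\sigma,[t-1])$ if and only if $d_1(\pi)$ does. I would emphasize that this equivalence is stronger than the definition demands, as it ignores gap vectors entirely; proving it for all such $\pi$ (not merely those whose spacing fails every gap criterion) yields the lemma at once. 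I would then split the equivalence into a ``no creation'' and a ``no destruction'' half.

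For ``no creation'' I would point out that deleting the \emph{first} letter is special: the letters $\pi_2,\ldots,\pi_n$ remain mutually contiguous afterward, so $d_1$ introduces no new adjacency among the surviving letters. Consequently any consecutive copy of $\sigma$ appearing in $d_1(\pi)=\red(\pi_2\cdots\pi_n)$ on some block of positions lifts, after shifting indices by one, to a consecutive copy of $\sigma$ on the corresponding block of positions in $\pi$. Hence if $\pi$ avoids $(\sigma,[t-1])$ then so does $d_1(\pi)$. This implication needs no hypothesis on $p$.

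For ``no destruction'' I would use the hypothesis to show that no consecutive copy of $\sigma$ can ever involve the letter $\pi_1$. A consecutive copy using position $1$ must occupy positions $1,2,\ldots,t$, forcing $\pi_1\cdots\pi_t \oi \sigma$. Reducing and restricting to the prefix, this forces $p_1\cdots p_{\min(t,k)}\oi\sigma_1\cdots\sigma_{\min(t,k)}$: when $k\ge t$ the first $t$ letters already lie in the prefix, giving $p_1\cdots p_t\oi\sigma$; when $k<t$ the copy runs past the prefix, but its first $k$ letters give $p\oi\sigma_1\cdots\sigma_k$. Either outcome contradicts the standing hypothesis $p_1\cdots p_{\min(t,k)}\not\oi\sigma_1\cdots\sigma_{\min(t,k)}$. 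Thus every consecutive copy of $\sigma$ in $\pi$ lies entirely within $\pi_2\cdots\pi_n$ and, by the same index shift as above, survives the deletion; so if $\pi$ contains $(\sigma,[t-1])$ then $d_1(\pi)$ does too.

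Together the two implications show that $\pi$ and $d_1(\pi)$ are simultaneously $B$-avoiding or not, so $d_1$ carries $\Spt{n}{B}{p}{w}$ bijectively onto $\Spt{n-1}{B}{d_1(p)}{d_1(w)}$ for every $w$, and $\{1\}$ is reversibly deletable. I expect the only real content to lie in the ``no destruction'' step, and specifically in the bookkeeping that converts ``a consecutive copy uses $\pi_1$'' into an order-isomorphism condition on the length-$\min(t,k)$ prefix; the case split on whether $t$ exceeds $k$ is the single place where the precise form of the hypothesis is consumed, and is where I would be most careful.
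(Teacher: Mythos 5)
Your proof is correct and follows essentially the same route as the paper's: the paper likewise argues that the hypothesis forces $\pi_1$ to be uninvolved in any copy of $(\sigma,[t-1])$ (which is your ``no destruction'' half) and that $d_1$ creates no new adjacencies (your ``no creation'' half), concluding that $\pi$ avoids the pattern if and only if $d_1(\pi)$ does. You simply make explicit the bookkeeping---the $t$ versus $k$ case split and the passage from the avoidance equivalence to bijectivity of the restricted map---that the paper's two-sentence proof leaves to the reader.
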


\begin{proof}
  Suppose the permutation starts with prefix pattern $p$ such that $\pi$ $p_1 p_2 \cdots p_{\min(t,k)} \not\oi \sigma_1 \sigma_2 \cdots \sigma_{\min(t,k)}$.  Then $\pi_1$ could not be involved in a copy of the consecutive pattern $(\sigma, [t-1])$, since otherwise $p_1 p_2 \cdots p_{\min(t,k)} \oi \sigma_1 \sigma_2 \cdots \sigma_{\min(t,k)}$.  To verify $\{1\}$ is reversibly deletable, observe that applying the deletion map $d_1$ does not create new adjacencies, and so $d_1(\pi)$ avoids $(\sigma,[t-1])$ if and only if $\pi$ avoids $(\sigma,[t-1])$.
\end{proof}

Lemma \ref{lem:cons1} is conservative since larger sets can be reversibly deletable.  The set $\{1,2,\ldots, s\}$ is reversibly deletable for prefix $p$ with respect to $(\sigma, [t-1])$ if
$p_a p_{a+1} \cdots p_{\min(a+t-1, |p|)}$ is not order-isomorphic to $\sigma_{1} \sigma_{2} \cdots \sigma_{\min(t, |p|-a+1)}$ for each $a \leq s$.  This fact can be proven by the same arguments, since $d_{\{1,2,\ldots,s\}}$ will not create new adjacencies.  Looking ahead to Lemma \ref{lem:unionRD}, we will keep to $s=1$ so that the only non-empty reversibly deletable set which appears in a given scheme is $\{1\}$.
 
\begin{lemma}\label{lem:cons2}
    Let $\sigma\in\S{t}$.  For any permutation $p$ containing the consecutive pattern $(\sigma, [t-1])$, $\vec{0} = \langle 0,0,\ldots, 0\rangle$ is a gap vector for prefix $p$ with respect to $\{(\sigma,[t-1])\}$.
\end{lemma}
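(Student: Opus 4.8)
The plan is to unpack the definition of a gap vector at its extreme value. Since $\vec{0} = \langle 0, \ldots, 0 \rangle$ is the minimum element of $\mathbb{N}^{k+1}$ under the product order, \emph{every} spacing vector satisfies $\vec{g}(n,w) \geq \vec{0}$. So, writing $B = \{(\sigma, [t-1])\}$, the assertion that $\vec{0}$ is a gap vector for $p$ with respect to $B$ is equivalent to the assertion that $\Spt{n}{B}{p}{w} = \emptyset$ for every $n$ and every $w$ with $\red(w) = p$. Equivalently, taking the union over all such $w$, it reduces to the single statement $\Sp{n}{B}{p} = \emptyset$ for all $n$: no $B$-avoiding permutation has prefix pattern $p$ at all.

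The key step is then to show that any permutation $\pi$ whose prefix reduces to $p$ is \emph{forced} to contain the consecutive pattern $(\sigma, [t-1])$. By hypothesis $p$ contains $(\sigma, [t-1])$, so there is an index $j$ with $\red(p_j p_{j+1} \cdots p_{j+t-1}) = \sigma$. First I would note that if $\red(\pi_1 \cdots \pi_k) = p$, then order-isomorphism of the prefix transfers this relation verbatim to $\pi$, giving $\red(\pi_j \pi_{j+1} \cdots \pi_{j+t-1}) = \sigma$. Because the indices $j, j+1, \ldots, j+t-1$ are literally consecutive positions of $\pi$, this subword is an occurrence of the \emph{consecutive} pattern $(\sigma, [t-1])$, and hence $\pi$ fails to avoid $B$.

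I would close by observing that this contradicts $\pi \in \Sp{n}{B}{p}$, so $\Sp{n}{B}{p} = \emptyset$ and a fortiori $\Spt{n}{B}{p}{w} = \emptyset$ for all $w$, which by the first paragraph is exactly the statement that $\vec{0}$ is a gap vector. I do not expect a genuine obstacle here; the only point worth flagging is the contrast with the classical case, where consecutivity plays no role. Here the required internal adjacencies of $\sigma$ come for free precisely because the entire copy lives inside the first $k$ positions, which are consecutive in $\pi$ by construction. This is exactly the feature that makes the strongest possible gap vector $\vec{0}$ appear as soon as the prefix $p$ already contains the consecutive pattern.
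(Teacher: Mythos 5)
Your proof is correct and follows essentially the same route as the paper's: both arguments observe that since $p$ contains $(\sigma,[t-1])$ in its (literally consecutive) prefix positions, every permutation with prefix pattern $p$ inherits a consecutive copy of $\sigma$, so $\Spt{n}{(\sigma,[t-1])}{p}{w}=\emptyset$ for all $w$, which is precisely the gap vector condition for $\vec{0}$. Your write-up merely spells out the two steps the paper leaves implicit (that $\vec{0}$ being a gap vector means \emph{all} the sets are empty, and that order-isomorphism of prefixes preserves the consecutive occurrence), which is a fair elaboration rather than a different approach.
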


\begin{proof}
If $p$ already contains $\sigma$, then no permutation containing $p$ (in particular, starting with prefix pattern $p$) can avoid $\sigma$.  Hence $\Spt{n}{(\sigma,[t-1])}{p}{w}=\emptyset$ for any prefix word $w$, and the result follows from the definition of gap vectors.
\end{proof}

Lemmas \ref{lem:cons1} and \ref{lem:cons2} combine to give us the following Proposition:
\begin{proposition}\label{prop:cons}
  For $\sigma\in\S{t}$, the pattern set $\{(\sigma, [t-1])\}$ admits an enumeration scheme of depth $t$ where every reversibly deletable set is either $\emptyset$ or $\{1\}$.
\end{proposition}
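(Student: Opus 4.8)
The plan is to build the scheme explicitly by hand, guided by the two preceding lemmas, rather than to trace Algorithm~\ref{ESalg} step by step. Write $q_k := \red(\sigma_1 \sigma_2 \cdots \sigma_k)$ for the reduction of the length-$k$ prefix of $\sigma$, so that $q_0 = \epsilon$ and $q_t = \sigma$. I would assign to every prefix $p \in \S{k}$ with $0 \le k \le t$ a triple according to the following trichotomy: if $p = q_k$ with $k < t$, set $(q_k, \emptyset, \emptyset)$; if $p = q_t = \sigma$, set $(\sigma, \{\vec 0\}, \emptyset)$; and for every other $p$, set $(p, \emptyset, \{1\})$. Let $E$ be the set of those triples whose prefixes are reachable from $\epsilon$ under the closure rules of Section~\ref{sec:formal}.

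First I would check that every assigned triple is legitimate. The empty set is always reversibly deletable, so $(q_k, \emptyset, \emptyset)$ is valid for $k < t$. For $p = \sigma$, Lemma~\ref{lem:cons2} shows $\vec 0$ is a gap vector, so $\{\vec 0\}$ is a valid gap-vector basis. For every remaining $p \ne q_k$, note that since $p \in \S{k}$ with $k \le t$ we have $\min(t,k)=k$, and $p_1 \cdots p_k \oi \sigma_1 \cdots \sigma_k$ would force $p = q_k$; hence the hypothesis $p_1 \cdots p_{\min(t,k)} \not\oi \sigma_1 \cdots \sigma_{\min(t,k)}$ of Lemma~\ref{lem:cons1} holds, so $\{1\}$ is reversibly deletable and $(p, \emptyset, \{1\})$ is valid. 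In particular no nonempty reversibly deletable set beyond $\{1\}$ is ever used, as the statement requires.

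Next I would verify the closure conditions and read off the depth. Condition~1 holds since $(\epsilon,\emptyset,\emptyset)=(q_0,\emptyset,\emptyset)\in E$. Condition~2a fires exactly for the triples with $R_p=\emptyset$ and $\vec 0 \notin G_p$; by the trichotomy these are precisely the chain prefixes $q_k$ with $k<t$, whose children all have length $k+1 \le t$ and hence carry assigned triples. Condition~2b fires exactly for the triples with $R_p = \{1\}$, and for these $d_{\{1\}}(p)=\red(p_2 \cdots p_k)$ has length $k-1 < t$ and again carries an assigned triple. Thus $E$ is closed. Crucially, refinement (2a) occurs only along the single chain $q_0, q_1, \dots, q_{t-1}$ and raises the length by one, while deletion (2b) strictly lowers the length; consequently no prefix of length exceeding $t$ is ever produced, and length $t$ is attained by the child $\sigma$ of $q_{t-1}$. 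Since every prefix in $E$ has length at most $t$ and there are finitely many permutations of each such length, $E$ is finite of depth exactly $t$.

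The step demanding the most care is the depth bound, i.e., confirming that the construction does not unfold into a deep branching tree. The key observation is that among all permutations of a given length $k \le t$ exactly one --- namely $q_k$ --- fails the hypothesis of Lemma~\ref{lem:cons1}; every other length-$k$ prefix is immediately handled by deleting its first letter. Hence the only prefixes requiring further refinement form the single path $q_0 \to q_1 \to \cdots \to q_{t-1} \to \sigma$, with the non-matching children at each level branching off into $\{1\}$-deletions that terminate by strictly decreasing length. For $k=t$ even the matching child $\sigma$ halts, since $\min(t,t)=t$ forces $\vec 0 \in G_\sigma$ by Lemma~\ref{lem:cons2}. Once this monotonicity --- refinement only on the chain, deletion always shrinking --- is in hand, finiteness and the depth bound follow at once, completing the proof.
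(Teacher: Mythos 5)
Your proof is correct and follows essentially the same route as the paper's: both assemble the scheme from Lemmas \ref{lem:cons1} and \ref{lem:cons2} and terminate it at depth $t$ via the dichotomy that every length-$t$ prefix is either $\sigma$ (so $\vec{0}\in G_p$) or has $\{1\}$ reversibly deletable. The only real difference is that the paper simply places every permutation of length at most $t$ into $E$ (accepting the redundancy it acknowledges afterward), while you retain only the prefixes reachable along the chain $q_0,\dots,q_{t-1}$ --- precisely the leaner scheme the paper itself sketches in the remark following the proposition.
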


\begin{proof}
 We will construct an enumeration scheme $E$ for $(\sigma,[t-1])$-avoiding permutations.  For each permutation $p$ of length at most $t$, Lemmas \ref{lem:cons1} and \ref{lem:cons2} imply a gap vector basis $G_p$ and a reversibly deletable set $R_p$, and we add the triple $(p, G_p, R_p)$ to $E$.  Observe every permutation of length exactly $t$ is either $p=\sigma$, in which case $\vec{0}$ is a gap vector, or $p\neq\sigma$, in which case $\{1\}$ is reversibly deletable, so we see that $E$ satisfies criterion (2a) given in Section \ref{sec:formal}.  Since every permutation $p$ of length at most $t$ appears in $E$, we see that criterion (2b) is also satisfied.  Hence $E$ is a valid enumeration scheme with finitely many elements.
\end{proof}

This proof has some redundancy built into it.  For a more efficient approach, we could execute Algorithm \ref{ESalg}, using Lemmas \ref{lem:cons1} and \ref{lem:cons2} to obtain gap vector bases (either $\emptyset$ or $\{ \vec{0} \}$) and our reversibly deletable sets (either $\emptyset$ or $\{1\}$).  The algorithm terminates since every prefix pattern $p$ of length $t$ is either $p=\sigma$, in which case $\vec{0}$ is a gap vector, or $p\neq\sigma$, in which case $\{1\}$ is reversibly deletable.  Hence $E$ contains no prefix patterns of length greater than $t$.

We now present the analogues of Lemmas \ref{lem:cons1} and \ref{lem:cons2} and Proposition \ref{prop:cons} for patterns of the form $(\sigma, [t-2])$ for $\sigma\in\S{t}$.

\begin{lemma}\label{lem:tail1}
  Let $\sigma\in\S{t}$ and consider prefix pattern $p\in\S{k}$.  Then the set $\{1\}$ is reversibly deletable for prefix $p$ with respect to $(\sigma,[t-2])$ if $p_1 p_2 \cdots p_{\min(t-1,k)} \not\oi \sigma_1 \sigma_2 \cdots \sigma_{\min(t-1,k)}$.
\end{lemma}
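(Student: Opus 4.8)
The plan is to follow the proof of Lemma~\ref{lem:cons1} closely, adjusting for the single trailing dash in $(\sigma,[t-2])$. The structural fact that drives everything is that the map $R=\{1\}$ deletes only the initial letter, and $d_1(\pi)=\red(\pi_2\cdots\pi_n)$ creates \emph{no} new adjacencies: two letters among $\pi_2,\dots,\pi_n$ are adjacent in $d_1(\pi)$ exactly when they were adjacent in $\pi$. Since it is already known that $d_1$ is a bijection $\Spt{n}{\emptyset}{p}{w}\to\Spt{n-1}{\emptyset}{d_1(p)}{d_1(w)}$, it suffices to show that for every $\pi$ in the domain, $\pi$ avoids $(\sigma,[t-2])$ if and only if $d_1(\pi)$ does; the restriction to $B$-avoiders is then automatically a bijection.

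First I would dispose of the direction ``$\pi$ avoids $\Rightarrow d_1(\pi)$ avoids,'' which needs no hypothesis. Any occurrence of $(\sigma,[t-2])$ in $d_1(\pi)$ uses only letters of $\pi_2,\dots,\pi_n$; because deleting position $1$ preserves relative order and all required adjacencies among those letters, the occurrence lifts verbatim to $\pi$. Hence an occurrence in the image forces one in $\pi$, and contrapositively avoidance is preserved.

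The substantive direction is the converse, ``$\pi$ contains $\Rightarrow d_1(\pi)$ contains,'' and here the hypothesis is used. The claim I would establish is that $\pi_1$ cannot participate in any occurrence of $(\sigma,[t-2])$. Indeed, if some occurrence at positions $i_1<\cdots<i_t$ used $\pi_1$, then necessarily $i_1=1$ (position $1$ is minimal), placing $\pi_1$ in the role of $\sigma_1$; the adjacencies $[t-2]$ then pin the head to positions $1,2,\dots,t-1$, forcing $\red(\pi_1\cdots\pi_{t-1})=\sigma_1\cdots\sigma_{t-1}$. Restricting to the first $\min(t-1,k)$ letters yields $p_1\cdots p_{\min(t-1,k)}\oi\sigma_1\cdots\sigma_{\min(t-1,k)}$, contradicting the hypothesis. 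With $\pi_1$ excluded from every occurrence, each occurrence in $\pi$ uses only positions $\geq 2$ and therefore survives the deletion, producing an occurrence in $d_1(\pi)$.

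The main obstacle, and the only spot requiring genuine care, is the bookkeeping in the case $k<t-1$: there the head positions $1,\dots,t-1$ run past the prefix, so I must argue that order-isomorphism of $\pi_1\cdots\pi_{t-1}$ to $\sigma_1\cdots\sigma_{t-1}$ restricts to order-isomorphism of the prefix-length initial segment, i.e.\ that $\min(t-1,k)=k$ captures precisely the overlap with $p$ (while $\min(t-1,k)=t-1$ in the case $k\geq t-1$). This is exactly why the statement reads $\min(t-1,k)$ rather than $t-1$, paralleling the role of $\min(t,k)$ in Lemma~\ref{lem:cons1}; the shift from $t$ to $t-1$ reflects that the trailing dash frees $\sigma_t$, so only the consecutive head $\sigma_1\cdots\sigma_{t-1}$ can constrain whether $\pi_1$ begins a copy.
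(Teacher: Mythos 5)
Your proof is correct and takes essentially the same approach as the paper's: both arguments rest on the two facts that the consecutive head $\sigma_1\cdots\sigma_{t-1}$ pins any copy involving $\pi_1$ to positions $1,\dots,t-1$ (so the hypothesis rules out $\pi_1$ participating in any occurrence), and that $d_1$ creates no new adjacencies, so avoidance of $(\sigma,[t-2])$ is preserved in both directions. Your write-up simply spells out the details (the two directions of the equivalence and the $\min(t-1,k)$ bookkeeping) that the paper compresses into a two-sentence reference to Lemma~\ref{lem:cons1}.
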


\begin{proof}
 As in the proof for \ref{lem:cons1}, the letter $\pi_1$ cannot be involved in the pattern $(\sigma, [t-2])$ unless $\pi_1 \pi_2 \cdots \pi_{\min(t-1,k)}$ is order-isomorphic to $\sigma_1 \sigma_2 \cdots \sigma_{\min(t-1,k)}$.  Furthermore, the deletion map $d_1$ does not create new adjacencies and so preserves $(\sigma, [t-2])$-avoidance. 
\end{proof}

Like Lemma \ref{lem:cons1}, Lemma \ref{lem:tail1} is conservative.  The set $\{1,2,\ldots, s\}$ is reversibly deletable for prefix $p$ with respect to $(\sigma, [t-2])$ if
$p_a p_{a+1} \cdots p_{\min(a+t-2, |p|)}$ is not order-isomorphic to $\sigma_{1} \sigma_{2} \cdots \sigma_{\min(t-1, |p|-a+1)}$ for each $a \leq s$.  

\begin{lemma}\label{lem:tail2}
    Let $\sigma\in\S{t}$ and consider the prefix pattern $p=\sigma_1 \cdots \sigma_{t-1}$.  Then $\vec{v} = \langle 0,\ldots,0,1,0,\ldots, 0\rangle$ is a gap vector for the prefix $p$ with respect to $\{(\sigma,[t-2])\}$, where $v_{\sigma_t}=1$ and $v_i=0$ for other $i$.  Further, $\{1\}$ is reversibly deletable for $p$ with respect to $(\sigma,[t-2])$.
\end{lemma}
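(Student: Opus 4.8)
The plan is to prove the two assertions separately, dispatching the gap vector with Theorem~\ref{thm:GapTest} and the reversible deletion with a direct argument showing that $\pi_1$ cannot enter a forbidden copy once the gap-vector criterion fails. Throughout write $B = \{(\sigma,[t-2])\}$, and recall that $\vec v$ is the vector whose only nonzero entry is a $1$ in position $\sigma_t$. For the gap vector I would invoke Theorem~\ref{thm:GapTest}, so it suffices to check that every $\pi \in A(p,\vec v)$ contains a copy of $(\sigma,[t-2])$ whose head lies in the prefix $\pi_1 \cdots \pi_{t-1}$. Since $|p| = t-1$ and $|\vec v| = 1$, each such $\pi$ has length $t$ with a single letter $\pi_t$ following the prefix, and that letter lies in the $\sigma_t$-th gap, i.e.\ $\pi_t$ exceeds exactly $\sigma_t - 1$ of the prefix letters. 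By hypothesis $\pi_1 \cdots \pi_{t-1} \oi \sigma_1 \cdots \sigma_{t-1}$, so $\pi_t$ occupies precisely the rank that $\sigma_t$ occupies among $\sigma_1,\dots,\sigma_t$; hence $\red(\pi_1 \cdots \pi_t) = \sigma$. Positions $1,\dots,t-1$ are consecutive, realizing the adjacencies $[t-2]$, and the dash between $\sigma_{t-1}$ and $\sigma_t$ imposes no constraint on $\pi_t$. Thus $\pi$ contains $(\sigma,[t-2])$ with head $\red(\sigma_1 \cdots \sigma_{t-1})$ inside the prefix, and Theorem~\ref{thm:GapTest} yields that $\vec v$ is a gap vector.

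For reversibility, the two guiding observations are that deleting the leftmost letter creates no new index-adjacencies, so $d_1(\pi)$ contains $(\sigma,[t-2])$ if and only if the suffix $\pi_2 \cdots \pi_n$ does; and that, since $\vec v$ is a gap vector, any $w$ with $\Spt{n}{B}{p}{w} \neq \emptyset$ must fail its criterion, forcing the $\sigma_t$-th component of $\vec g(n,w)$ to vanish. I would then argue that $\pi_1$ cannot participate in any copy of $(\sigma,[t-2])$: being leftmost, $\pi_1$ could only play the role of $\sigma_1$; the adjacencies $1,\dots,t-2$ would then pin the head of the copy to positions $1,\dots,t-1$ and place $\sigma_t$ at some position $i_t \geq t$; but such a letter $\pi_{i_t}$ would have to fall in the $\sigma_t$-th gap, which is empty because the $\sigma_t$-th component of $\vec g(n,w)$ is $0$.

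Combining these, $\pi$ contains $(\sigma,[t-2])$ if and only if its suffix does if and only if $d_1(\pi)$ does; equivalently, $\pi$ avoids the pattern exactly when $d_1(\pi)$ does. Since $d_1 \colon \Spt{n}{\emptyset}{p}{w} \to \Spt{n-1}{\emptyset}{d_1(p)}{d_1(w)}$ is already a bijection, it restricts to a bijection between the $B$-avoiding subsets, proving $\{1\}$ reversibly deletable. The main obstacle is conceptual rather than computational: one must correctly match the role played by the final letter $\sigma_t$ to the spacing-vector index $\sigma_t$, and verify that the adjacency block $[t-2]$ forces the head into positions $1,\dots,t-1$, so that the sole remaining freedom — the placement of $\sigma_t$ — is governed exactly by the $\sigma_t$-th gap.
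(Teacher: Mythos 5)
Your proposal is correct and takes essentially the same approach as the paper: the gap vector is certified via Theorem~\ref{thm:GapTest} (you check only $A(p,\vec{v})$ with the head-in-prefix condition, which is exactly what the theorem requires, while the paper verifies containment directly for all $\vec{u}\geq\vec{v}$), and the reversible-deletability argument is identical — since $w$ fails the criterion, the $\sigma_t$-th gap is empty, so no $\pi_i$ with $i>t-1$ can complete $\pi_1\cdots\pi_{t-1}$ to a copy of $\sigma$, hence $\pi_1$ lies in no copy, and $d_1$ creates no new adjacencies. The only cosmetic difference is that you spell out why $\pi_1$ must play the role of $\sigma_1$ and why the adjacencies pin the head to positions $1,\dots,t-1$, which the paper leaves implicit.
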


\begin{proof}
 Let $\pi\in A(p,\vec{u})$ for $\vec{u}\geq \vec{v}$.  Then we know $\pi_1\cdots \pi_{t-1} \oi \sigma_{1} \cdots \sigma_{t-1}$, so define indices $a$ and $b$ so that $\pi_a$ corresponds to $\sigma_{t}-1$ and $\pi_b$ corresponds to $\sigma_{t}+1$.  Since $\pi$ has spacing vector $\vec{u}\geq\vec{v}$, we know there is some $\pi_i$ for $i>t-1$ such that the value $\pi_i$ lies between the values of $\pi_a$ and $\pi_b$.  In short, $\pi_1 \cdots \pi_{t-1} \pi_i$ forms a copy of $(\sigma,[t-2])$.  Hence $\vec{v}$ is a gap vector as per Theorem \ref{thm:GapTest}.

 We now turn to proving $\{1\}$ is reversibly deletable.  Suppose that $\pi\in\Spt{n}{(\sigma,[t-2])}{p}{w}$ for a prefix word $w$ such that $\vec{u}=\vec{g}(n,w)\not\geq v$.  Then $u_{\sigma_t}=0$, so we see that there is no $\pi_i$ for $i>t-1$ such that $\pi_1 \cdots \pi_{t-1} \pi_i \oi \sigma$.  Therefore $\pi_1$ cannot be involved in any copies of $(\sigma_1 \sigma_2\cdots\sigma_{t-1}, [t-2])$ and hence cannot be involved in a copy of $(\sigma, [t-2])$.  Since $d_1$ preserves $(\sigma, [t-2])$-avoidance as previously seen, we have shown that $\{1\}$ is reversibly deletable.
\end{proof}

We combine Lemmas \ref{lem:tail1} and \ref{lem:tail2} to form Proposition \ref{prop:tail}:

\begin{proposition}\label{prop:tail}
  For $\sigma\in\S{t}$, the pattern set $\{(\sigma, [t-2])\}$ admits an enumeration scheme of depth $t-1$ where every reversibly deletable set is either $\emptyset$ or $\{1\}$.
\end{proposition}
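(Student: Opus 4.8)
The plan is to mirror the proof of Proposition~\ref{prop:cons}, replacing Lemmas~\ref{lem:cons1} and~\ref{lem:cons2} by their tail-pattern analogues, Lemmas~\ref{lem:tail1} and~\ref{lem:tail2}. I would construct an enumeration scheme $E$ for $(\sigma,[t-2])$-avoiding permutations by adding a triple $(p, G_p, R_p)$ for each permutation $p$ of length at most $t-1$, with the gap vector basis and reversibly deletable set supplied by these two lemmas, and then verify that $E$ satisfies the formal criteria of Section~\ref{sec:formal} and terminates at depth $t-1$.

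First I would treat the prefixes of length exactly $t-1$, since this is where the recursion must close off. If $p \neq \red(\sigma_1 \cdots \sigma_{t-1})$, then $p \not\oi \sigma_1 \cdots \sigma_{t-1}$ (being a distinct permutation of the same length), so Lemma~\ref{lem:tail1} with $k=t-1$ shows $\{1\}$ is reversibly deletable, and I set $R_p=\{1\}$ with $G_p=\emptyset$. In the single remaining case $p = \red(\sigma_1 \cdots \sigma_{t-1})$, Lemma~\ref{lem:tail2} supplies a gap vector basis $G_p=\{\vec{v}\}$ with $v_{\sigma_t}=1$ \emph{together with} the reversible deletability of $\{1\}$, so I again set $R_p=\{1\}$. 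In both cases $R_p=\{1\}\neq\emptyset$, so criterion (2b) requires only that $d_{\{1\}}(p)$, a prefix of length $t-2$, lie in $E$; no child of length $t$ is ever demanded. This is exactly what caps the depth at $t-1$.

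Next I would confirm closure under the formal criteria and the presence of every required triple at lengths $k<t-1$. For such a prefix, if $p \not\oi \sigma_1 \cdots \sigma_k$ then Lemma~\ref{lem:tail1} makes $\{1\}$ reversibly deletable and that branch terminates downward via $d_{\{1\}}(p)$; if instead $p \oi \sigma_1 \cdots \sigma_k$ (the unique prefix tracking $\sigma$), I set $R_p=\emptyset$ and $G_p=\emptyset$, so that criterion (2a) forces all children of $p$ into $E$. Every such child has length at most $t-1$, and since I have assigned a triple to every permutation up to that length, criteria (1) and (2) of the formal definition hold and $E$ is finite. As in the remark after Proposition~\ref{prop:cons}, the same bookkeeping can be phrased as a terminating run of Algorithm~\ref{ESalg}.

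The one point requiring care -- and the only place the argument is more delicate than in the consecutive case -- is the termination at the critical prefix $p=\red(\sigma_1 \cdots \sigma_{t-1})$. For a consecutive pattern the scheme runs one level deeper because the gap vector $\vec{0}$ appears only once $p$ literally contains $\sigma$, forcing depth $t$; here Lemma~\ref{lem:tail2} closes the branch one step earlier by furnishing a nontrivial gap vector and the reversible deletability of $\{1\}$ simultaneously at length $t-1$. Checking that this dual conclusion leaves no branch open at length $t$ is the crux of the verification, after which the depth-$(t-1)$ claim and the restriction of every reversibly deletable set to $\emptyset$ or $\{1\}$ follow immediately.
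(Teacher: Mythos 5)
Your proposal is correct and takes essentially the same approach as the paper: construct $E$ from the triples $(p,G_p,R_p)$ for all prefixes $p$ of length at most $t-1$ using Lemmas~\ref{lem:tail1} and~\ref{lem:tail2}, with the key observation that every prefix of length $t-1$ gets $R_p=\{1\}$ (via Lemma~\ref{lem:tail1} when $p\not\oi\sigma_1\cdots\sigma_{t-1}$, and via Lemma~\ref{lem:tail2} in the single remaining case), so criterion (2a) never forces a prefix of length $t$ into $E$. Your write-up simply spells out the case analysis that the paper's terse proof leaves implicit.
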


\begin{proof}
  The proof is similar to that of Proposition \ref{prop:cons}.  Construct $E$ by adding the triple $(p,G_p,R_p)$ for each permutation $p$ of length at most $t-1$, where the gap vector basis $G_p$ and reversibly deletable set $R_p$ are given by Lemmas \ref{lem:tail1} and \ref{lem:tail2}.  Since every permutation $p$ of length $t-1$ has $R_p=\{1\}$, we see that $E$ is finite while still satisfying criterion (2a).
\end{proof}

As in Proposition \ref{prop:cons}, the scheme constructed in Proposition \ref{prop:tail} has more terms than the scheme which would be constructed via Algorithm \ref{ESalg}.

Propositions \ref{prop:cons} and \ref{prop:tail} demonstrate how to construct schemes for singleton sets of patterns of certain forms.  The following lemmas outline when one can combine enumeration schemes for pattern sets $B$ and $B'$ to construct an enumeration scheme for $B \cup B'$.

\begin{lemma}\label{lem:unionGV}
  Suppose that $\vec{v}$ is a gap vector for the prefix $p$ with respect to $B$.  Then $\vec{v}$ is a gap vector for $p$ with respect to any pattern set $C\supseteq B$.  In particular, for a pattern set $B'$, $\vec{v}$ is a gap vector for $C=B \cup B'$.
\end{lemma}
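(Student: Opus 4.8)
The plan is to reduce the claim to the definition of a gap vector by invoking the elementary monotonicity of avoidance sets under enlargement of the forbidden set: adding more forbidden patterns can only shrink the collection of avoiders. The single fact doing all the work is the set inclusion $\Spt{n}{C}{p}{w} \subseteq \Spt{n}{B}{p}{w}$, and once that is in hand the result is a one-line appeal to the hypothesis.

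First I would record the inclusion. Since $C \supseteq B$, any permutation that avoids every pattern in $C$ in particular avoids every pattern in $B$, so $\Sav{n}{C} \subseteq \Sav{n}{B}$ for all $n$; this is immediate from $\Sav{n}{B} = \bigcap_{\sigma \in B} \Sav{n}{\sigma}$, as intersecting over a larger index set yields a smaller set. Imposing the additional conditions $\red(\pi_1 \cdots \pi_k) = p$ and $\pi_i = w_i$ for $1 \leq i \leq k$ defines $\Spt{n}{C}{p}{w}$ and $\Spt{n}{B}{p}{w}$ as the corresponding subsets of $\Sav{n}{C}$ and $\Sav{n}{B}$, so the inclusion descends to $\Spt{n}{C}{p}{w} \subseteq \Spt{n}{B}{p}{w}$ for every $n$ and every prefix word $w$.

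Next I would fix an arbitrary $n$ and an arbitrary $w$ with $\vec{g}(n,w) \geq \vec{v}$. By hypothesis $\vec{v}$ is a gap vector for $p$ with respect to $B$, so the definition forces $\Spt{n}{B}{p}{w} = \emptyset$; combined with the inclusion just established, this gives $\Spt{n}{C}{p}{w} = \emptyset$ as well. Since $n$ and $w$ were arbitrary subject only to $\vec{g}(n,w) \geq \vec{v}$, the defining condition for a gap vector is met, and $\vec{v}$ is a gap vector for $p$ with respect to $C$. The ``in particular'' statement follows by taking $C = B \cup B'$, which contains $B$.

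There is no genuine obstacle here; the only point requiring care is the direction of the containment $\Spt{n}{C}{p}{w} \subseteq \Spt{n}{B}{p}{w}$ (a larger forbidden set leaves fewer survivors, not more), after which the proof is purely formal.
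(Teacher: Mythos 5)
Your proof is correct and follows essentially the same route as the paper's: establish the inclusion $\Spt{n}{C}{p}{w} \subseteq \Spt{n}{B}{p}{w}$ from $C \supseteq B$, then note that emptiness of the larger set forces emptiness of the smaller one whenever $\vec{g}(n,w) \geq \vec{v}$. Your write-up merely unpacks the definitions more explicitly than the paper's two-sentence argument; there is no substantive difference.
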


\begin{proof}
  If $C\supseteq B$, then $\Spt{n}{C}{p}{w} \subseteq \Spt{n}{B}{p}{w}$.  Therefore any criterion implying $\Spt{n}{B}{p}{w}=\emptyset$, in particular a gap vector criterion, also implies that $\Spt{n}{C}{p}{w}=\emptyset$.
\end{proof}

\begin{lemma}\label{lem:unionRD}
  Suppose that $R$ is reversibly deletable for the prefix $p$ with respect to $B$ and with respect to $B'$.  Then $R$ is reversibly deletable for $p$ with respect to $B \cup B'$.
\end{lemma}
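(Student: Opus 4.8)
The plan is to unwind the definition of reversible deletability and show that a map which is bijective with respect to each of $B$ and $B'$ separately must remain bijective with respect to their union. Recall that $R$ being reversibly deletable for $p$ with respect to a pattern set means that $d_R$ restricts to a bijection $\Spt{n}{B}{p}{w} \to \Spt{n-|R|}{B}{d_R(p)}{d_R(w)}$ for every $w$ failing all gap vector criteria for $p$. The cleanest route is through the characterization noted just before the containment-scenario discussion: $R$ is reversibly deletable with respect to $B$ exactly when, for every relevant $\pi$ (that is, every $\pi$ with prefix $p$ and a non-excluded spacing vector), $\pi$ avoids $B$ if and only if $d_R(\pi)$ avoids $B$. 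This "avoidance is preserved in both directions" formulation is the key, since avoidance of a union is just simultaneous avoidance of each piece.

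First I would fix a prefix word $w$ that fails all gap vector criteria for $p$ with respect to $C = B\cup B'$, and let $\pi$ be a permutation with prefix pattern $p$ and prefix word $w$. I would then recall that $\pi$ avoids $C$ if and only if $\pi$ avoids $B$ and $\pi$ avoids $B'$ simultaneously, and likewise $d_R(\pi)$ avoids $C$ if and only if $d_R(\pi)$ avoids both $B$ and $B'$. Applying the hypothesis for $B$ gives that $\pi$ avoids $B$ iff $d_R(\pi)$ avoids $B$, and applying it for $B'$ gives $\pi$ avoids $B'$ iff $d_R(\pi)$ avoids $B'$. Conjoining these two equivalences yields that $\pi$ avoids $C$ iff $d_R(\pi)$ avoids $C$, which is precisely the biconditional characterizing reversible deletability for $C$.

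There is one subtlety I must address to make this airtight: the hypotheses for $B$ and $B'$ guarantee the equivalence only for prefix words $w$ failing the respective gap vector criteria for $B$ and $B'$, whereas I need it for $w$ failing the gap vector criteria for $C$. By Lemma \ref{lem:unionGV}, every gap vector for $p$ with respect to $B$ (and with respect to $B'$) is also a gap vector with respect to $C$; equivalently, the gap vector ideal for $C$ contains those for $B$ and $B'$. Hence if $w$ fails all gap vector criteria for $C$, it in particular fails all of them for $B$ and all of them for $B'$, so both hypotheses are applicable to the $\pi$ at hand. (Put differently, $\Spt{n}{C}{p}{w} \neq \emptyset$ forces $\Spt{n}{B}{p}{w}\neq\emptyset$ and $\Spt{n}{B'}{p}{w}\neq\emptyset$, so the relevant maps are genuinely bijective on the larger domains that contain the permutations we care about.)

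The main obstacle, modest as it is, lies precisely in this compatibility of domains: one must verify that the set of $w$ over which we need the biconditional for $C$ is contained in the intersection of the two sets over which it is guaranteed for $B$ and $B'$. Once Lemma \ref{lem:unionGV} supplies that containment, the argument reduces to the purely logical fact that a conjunction of biconditionals is a biconditional of conjunctions, and the proof is complete. I would close by noting that $d_R(p)$ is determined by $p$ and $R$ alone, so the target prefix pattern is the same regardless of which pattern set we regard $R$ as deleting against, ensuring the codomains of the three bijections match up consistently.
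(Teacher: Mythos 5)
Your proof is correct and takes essentially the same route as the paper's: the paper writes $\Spt{n}{B\cup B'}{p}{w} = \Spt{n}{B}{p}{w} \cap \Spt{n}{B'}{p}{w}$ and intersects the two hypothesis bijections, which is exactly your conjunction-of-biconditionals argument phrased set-theoretically rather than pointwise. Your explicit check that a prefix word $w$ failing all gap vector criteria for $B\cup B'$ also fails them for $B$ and for $B'$ (via Lemma \ref{lem:unionGV}) spells out a domain-compatibility step that the paper's proof leaves implicit, so nothing is missing.
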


\begin{proof}
 First recall that $\Spt{n}{B \cup B'}{p}{w} = \Spt{n}{B}{p}{w} \cap \Spt{n}{B'}{p}{w}$.  From the definition of a reversibly deletable set, we know that we have the following bijections:
  \begin{equation*}
   \begin{split}
     d_R:& \Spt{n}{B}{p}{w} \to \Spt{n-|R|}{B}{d_R(p)}{d_R(w)} \\
     d_R:& \Spt{n}{B'}{p}{w} \to \Spt{n-|R|}{B'}{d_R(p)}{d_R(w)} \\
   \end{split}
  \end{equation*}
  It follows that we also have the bijection
  \begin{equation*}
    d_R: \Spt{n}{B \cup B'}{p}{w} \to \Spt{n-|R|}{B \cup B'}{d_R(p)}{d_R(w)}.
  \end{equation*}
 Hence $R$ is reversibly deletable for $p$ with respect to $B\cup B'$.
\end{proof}

We may combine Lemmas \ref{lem:unionGV} and \ref{lem:unionRD} combine with Propositions \ref{prop:cons} and \ref{prop:tail} to give us the following theorem:

\begin{theorem}\label{thm:BigClass}
 If a finite set $B$ contains only patterns of the form $(\sigma, [|\sigma|-1])$ and $(\sigma, [|\sigma|-2])$, then $B$ admits a finite enumeration scheme where every reversibly deletable set is either $\emptyset$ or $\{1\}$.
\end{theorem}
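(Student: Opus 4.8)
The plan is to reduce the analysis of $B$ to the single-pattern results of Propositions~\ref{prop:cons} and~\ref{prop:tail} and then glue them together with the union Lemmas~\ref{lem:unionGV} and~\ref{lem:unionRD}. Write $B=\{b_1,\ldots,b_m\}$, where each $b_i=(\sigma^{(i)},[t_i-1])$ or $b_i=(\sigma^{(i)},[t_i-2])$ with $t_i=|\sigma^{(i)}|$, and set $d=\max_i t_i$. I would construct a scheme $E$ containing a triple $(p,G_p,R_p)$ for every prefix pattern $p$ of length at most $d$, where $G_p=\bigcup_i G_p^{(i)}$ is the union of the gap-vector bases supplied for $p$ by the single-pattern results, $R_p=\{1\}$ whenever those results certify that $\{1\}$ is reversibly deletable for $p$ with respect to \emph{every} $b_i$, and $R_p=\emptyset$ otherwise. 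Lemma~\ref{lem:unionGV} guarantees that each vector in $G_p$ really is a gap vector for $p$ with respect to $B$, and iterating Lemma~\ref{lem:unionRD} guarantees that the declared $R_p=\{1\}$ really is reversibly deletable for $p$ with respect to $B$. It then remains only to check that $E$ is closed under the scheme axioms of Section~\ref{sec:formal} and that no prefix longer than $d$ is ever required; equivalently, that every prefix $p$ with $|p|=d$ satisfies either $\vec{0}\in G_p$ or $R_p=\{1\}$.

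For a consecutive pattern $b_i=(\sigma^{(i)},[t_i-1])$ this is immediate from the stated lemmas at every length $k\geq t_i$: if $p$ contains $b_i$ then $\vec{0}$ is a gap vector by Lemma~\ref{lem:cons2}, and otherwise $p_1\cdots p_{t_i}\not\oi\sigma^{(i)}$, so Lemma~\ref{lem:cons1} (with $\min(t_i,k)=t_i$) makes $\{1\}$ reversibly deletable with respect to $b_i$. Thus consecutive patterns never force expansion past length $t_i$, and in particular not at length $d$.

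The tail patterns are where the real work lies, and I expect this to be the main obstacle. Lemma~\ref{lem:tail1} disposes of every prefix whose initial $\min(t_i-1,|p|)$ letters are \emph{not} order-isomorphic to $\sigma^{(i)}_1\cdots\sigma^{(i)}_{\min(t_i-1,|p|)}$, and Lemma~\ref{lem:tail2} disposes of the head-matching prefix of length exactly $t_i-1$. But a combined scheme can be driven, by some \emph{other} pattern $b_j$, to head-matching prefixes of length $k>t_i-1$: prefixes $p$ with $p_1\cdots p_{t_i-1}\oi\sigma^{(i)}_1\cdots\sigma^{(i)}_{t_i-1}$ that nonetheless avoid $b_i$. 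Neither Lemma~\ref{lem:tail1} nor Lemma~\ref{lem:tail2} applies to these, so the crux is to remove the length restriction from Lemma~\ref{lem:tail2}. The extension exploits the hypothesis that $p$ avoids $b_i$: no letter of $p$ beyond the consecutive head $p_1\cdots p_{t_i-1}$ can complete a copy of $b_i$, so the coordinate of the spacing vector recording the vertical space at the value where $\sigma^{(i)}_{t_i}$ would fall is a single coordinate, giving a vector $\vec{v}$ with exactly one nonzero entry. Every permutation in $A(p,\vec{v})$ then places a letter in that gap, completing a copy of $b_i$ whose head lies in $p_1\cdots p_{t_i-1}$, so $\vec{v}$ is a gap vector by Theorem~\ref{thm:GapTest}. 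Moreover, once that coordinate is forced to $0$, no letter after the prefix can complete a copy of $b_i$ using $\pi_1$; since $\pi_1$ can enter a copy only as the leftmost letter of the consecutive head, $\pi_1$ lies in no copy of $b_i$, and because $d_1$ creates no new adjacencies, $\{1\}$ is reversibly deletable for $p$ with respect to $b_i$. This is precisely Lemma~\ref{lem:tail2} with the length hypothesis dropped.

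With this extension in hand, every head-matching prefix $p$ of length $k\geq t_i-1$ has either $\vec{0}\in G_p$ (if $p$ contains $b_i$) or $\{1\}$ reversibly deletable with respect to $b_i$, so tail patterns, like consecutive ones, never force expansion at length $d$. Combining the two cases over all $i$ and invoking the union lemmas shows that every prefix of length $d$ terminates, that axioms~(2a) and~(2b) of Section~\ref{sec:formal} are satisfied (a prefix with $R_p=\emptyset$ never sits at length $d$, and $d_{\{1\}}(p)$ has length $|p|-1\le d$), and that the only nonempty reversibly deletable set occurring is $\{1\}$. Since there are finitely many prefixes of length at most $d$, the scheme $E$ is finite, proving the theorem.
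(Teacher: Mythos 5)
Your proposal is correct and follows the same architecture as the paper's proof: build a triple $(p,G_p,R_p)$ for every prefix of length at most the maximal pattern length, take unions of the single-pattern gap-vector bases (Lemma \ref{lem:unionGV}), declare $R_p=\{1\}$ exactly when $\{1\}$ is reversibly deletable with respect to every pattern individually (Lemma \ref{lem:unionRD}), and then check termination at the maximal length. The one place you depart from the paper is also the place where your write-up is stronger. The paper disposes of a length-$M$ prefix $p$ that avoids $B$ by citing ``Lemma \ref{lem:cons1}, \ref{lem:tail1}, or \ref{lem:tail2},'' but Lemma \ref{lem:tail2} is stated only for the prefix $p=\sigma_1\cdots\sigma_{t-1}$ of length exactly $t-1$; it says nothing about a longer prefix whose first $t-1$ letters are order-isomorphic to $\sigma_1\cdots\sigma_{t-1}$ yet which avoids $(\sigma,[t-2])$. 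Such prefixes genuinely occur at length $M$ (e.g.\ $p=231$ for the pattern $12\d3$, where $M=3$), and neither Lemma \ref{lem:tail1} nor Lemma \ref{lem:tail2} as stated covers them, so the paper's citation is loose precisely there. Your extension of Lemma \ref{lem:tail2} is exactly the missing argument, and it is sound: since $p$ avoids $b_i$, no prefix letter lies in the value range that would complete the head copy $p_1\cdots p_{t_i-1}$, so that range is a single spacing-vector coordinate, Theorem \ref{thm:GapTest} makes the corresponding unit vector a gap vector, and once that coordinate vanishes $\pi_1$ (which can enter a copy only as the leftmost letter of the consecutive head) lies in no copy, while $d_1$ creates no new adjacencies. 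In short: your proof is correct, takes the paper's route, and patches the one step the paper's own proof glosses over; the paper's version buys brevity at the cost of that unstated extension.
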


\begin{proof}
 

Let $M$ be the maximum length of patterns $(\sigma, X)\in B$.  For each permutation $p$ of length at most $M$, let $G_p(\sigma, X)$ be the gap vector basis for prefix pattern $p$ with respect to $(\sigma, X)$ as implied by Lemma \ref{lem:cons2} or \ref{lem:tail2}.  Define the set
\begin{equation}
 G_p := \bigcup_{(\sigma, X)\in B} G_p(\sigma, X).
\end{equation}
By Lemma \ref{lem:unionGV}, $G_p$ is a gap vector basis for $p$ with respect to $B$.  Observe that $G_p$ may not be a minimal basis.

Similarly let $R_p(\sigma, X)$ be the reversibly deletable set for $p$ with respect to $(\sigma, X)$ as implied by Lemma \ref{lem:cons1}, \ref{lem:tail1}, or \ref{lem:tail2}.  Define the set
\begin{equation}
 R_p := \bigcap_{(\sigma, X)\in B} R_p(\sigma, X).
\end{equation}
Observe that each $R_p(\sigma, X)$ is either $\emptyset$ or $\{1\}$, so $R_p$ is either $\emptyset$ or $\{1\}$.  Further, $R_p=\{1\}$ if and only if each $R_p(\sigma, X)=\{1\}$.  Hence Lemma \ref{lem:unionRD} implies $R_p$ is reversibly deletable for $p$ with respect to $B$.

Let $E$ be the set of triples $(p, G_p, R_p)$ for each permutation $p$ of length at most $M$.  Clearly $E$ is finite and satisfies criteria (1) and (2b), and $E$ satisfies (2a) for any prefix $p$ of length less than $M$.  It remains to show that any prefix $p$ of length $M$ has either $\vec{0}\in G_p$ or non-empty $R_p$.   If $p$ contains any forbidden pattern $(\sigma, X)\in B$, then $G_p(\sigma,X) = \{\vec{0}\}$ by Lemma \ref{lem:cons2} or \ref{lem:tail2}.  Hence $\vec{0}\in G_p$.  On the other hand if $p$ avoids $B$, then $p$ avoids each $(\sigma, X)\in B$ and so each $R_p(\sigma, X) = \{1\}$ by Lemma \ref{lem:cons1}, \ref{lem:tail1}, or \ref{lem:tail2}.  Hence $R_p = \{1\}$.  Thus we see that $E$ satisfies criterion (2a) for all $p$ of length $M$, and so $E$ is a valid enumeration scheme for $B$.
\end{proof}

It is worth mentioning that Proposition \ref{prop:tail} and its supporting lemmas can be generalized to obtain a finite scheme for any set of patterns of the form $B=\{\sigma_1\sigma_2\cdots\sigma_{t}\d\sigma_{t+1}\d\sigma_{t+2}, \sigma_1\sigma_2\cdots\sigma_{t}\d\sigma_{t+2}\d\sigma_{t+1}\}$ based on the enumeration scheme for their common head $\tau = \red(\sigma_1\sigma_2\cdots\sigma_{t})$.  In this case the prefix $p=\tau$ has a gap vector basis $G_p = \{\vec{v}\}$, where $v_{\sigma_{t+1}} = 1$, $v_{\sigma_{t+2}-1}=1$ and $v_i = 0$ for other $i$ for $\sigma_{t+1}<\sigma_{t+2}-1$.  If $\sigma_{t+1}=\sigma_{t+2}-1$, then $v_{\sigma_{t+1}} = v_{\sigma_{t+2}-1}=2$ instead.  The triples for other prefixes $p\neq\tau$ appearing in $E_{\tau}$ transfer unchanged to $E_{\sigma}$.

One may similarly construct a scheme for the set of $k!$ patterns formed by appending to the consecutive pattern $\tau$ a dashed tail of $k$ letters $\sigma_{t+1}, \sigma_{t+1}+1, \ldots, \sigma_{t+k}$ in all possible orderings, e.g., $B=\{12\d3\d4\d5$, $12\d3\d5\d4$, $12\d4\d3\d5$, $12\d4\d5\d3$, $12\d5\d3\d4$, $12\d5\d4\d3\}$.  If we suppose $\sigma_{t+j} < \sigma_{t+j+1}$, then let $\vec{u}^{(j)}$ be the $0$-$1$ vector with a $1$ in position $\sigma_{t+j}-(j-1)$.  Then for prefix $p=\tau$ we have the gap vector $\vec{v}=\sum_{j} \vec{u}^{(j)}$.  The remaining triples $(p,G_p,R_p)\in E_{\tau}$ transfer unchanged to $E_{\sigma}$.  These pattern sets are identical to Kitaev's notion of partially-ordered generalized patterns in \cite{Kitaev2005}, where some letters of the pattern are incomparable (or rather, do not need to be compared).  Thus the example $B$ would be written as a single such pattern $12\d3\d3'\d3''$ where the letters acting as $3$, $3'$, $3''$ are incomparable.  The arguments above imply that the following is an enumeration scheme for patterns avoiding $12\d3\d3'\d3''$:
\begin{equation}
 \bigl\{ (\epsilon, \emptyset, \emptyset), (1, \emptyset, \emptyset), 
  (12, \{\langle 0,0,3 \rangle\}, \{1\}), (21, \emptyset, \{1\}) \bigr\}.
\end{equation}

One might hope we can continue this trend of adding vincular portions to patterns with known schemes to get new schemes, but of course this does not work in general\footnote{If it did then we would get enumeration schemes for all classical patterns, which certainly is not the case.}.  Still, there may be some interesting relationships between two vincular patterns with the same underlying permutation.  For example, every pattern $(1234, X)$ for $X\subseteq\{1,2,3\}$ has a finite scheme, whose depths (based on the implementation in gVATTER) are summarized in Table \ref{1234patterns}.  There do not appear to be any clear patterns dictating scheme depth for the vincular pattern $(1234,X)$ given the depths of other patterns $(1234,X')$, based on subset relations between $X$ and $X'$.  

\begin{table}[h]
	\centering
		\begin{tabular}{ccl}
	 $\sigma$     &  $X$         &  Scheme Depth  \\
			\hline
     $ 1234$    &  $\{1,2,3\}$ & 4   \\
    $ 123\d4$   &  $\{1,2\}$   & 3   \\
    $ 12\d34$   &  $\{1,3\}$   & 4   \\
    $ 1\d234$   &  $\{2,3\}$   & 4   \\
   $ 12\d3\d4$  &  $\{1\}$     & 4   \\
   $ 1\d23\d4$  &  $\{2\}$     & 3   \\
   $ 1\d2\d34$  &  $\{3\}$     & 5   \\
  $ 1\d2\d3\d4$ &  $\emptyset$ & 4   \\
		\end{tabular}
	\caption{Scheme depth for vincular pattern $(1234, X)$}
	\label{1234patterns}	
\end{table}

\section{Known successes and applications}\label{sec:Known}

\subsection{Analysis of Success Rates}\label{sec:Success}

Aside from the results of the previous section, there is no known classification of which pattern sets admit finite enumeration schemes.  In this section we present empirical results obtained from the implementation of the above algorithms in the Maple package gVATTER.  We will say that a set of forbidden patterns $B$ is \emph{$(d,M)$-scheme countable}, or $(d,M)$-SC, if either $B$, $B^{r}$, or $B^{-1}$ 
admits a finite enumeration scheme of depth at most $d$ with basis gap vectors with norm at most $M$.  As discussed in the introduction, $B$ is $(d,M)$-SC if and only if its set of complement patterns $B^c$ is $(d,M)$-SC.

The following data were assembled by checking whether each vincular pattern $(\sigma, X)$ is $(5,2)$-SC, where we chose $5$ and $2$ as a practical computational considerations.  While there are $k! \cdot 2^{k-1}$ vincular patterns of length $k$, we took advantage of symmetry when able to reduce the number of patterns to check.  To refine analysis, we separated the patterns of length $k$ by the locations of their dashes.  These are represented in Table \ref{tab:SingleSuccess} by ``block type,'' which is a vector describing the number of letters between each dash.  For example, the block type of the pattern $12\d35\d467$ is $(2,2,3)$.

\begin{table}[htbp]
	\centering
		\begin{tabular}{|r|r|r|r|}
		\hline
			Block type & \parbox{1.2 in}{Number of trivial\\ symmetry classes} & \parbox{1 in}{Number of\\ $(5,2)$-SC classes} & Percentage \\
			\hline
        (2) &  1 &  1 & 100\% \\
      (1,1) &  1 &  1 & 100\% \\
      \hline
			  (3) &  2 &  2 & 100\% \\
			(2,1) &  3 &  3 & 100\% \\
		(1,1,1) &  2 &  2 & 100\% \\
		\hline
		    (4) &  8 &  8 & 100\% \\
		  (3,1) & 12 & 12 & 100\% \\
		  (2,2) &  8 &  3 & 37.5\% \\
		(2,1,1) & 12 &  4 & 25\% \\  
		(1,2,1) &  8 &  6 & 75\% \\
  (1,1,1,1) &  7 &  2 & 28.6\% \\
  \hline
		\end{tabular}
	\caption{Success rate by block type}
	\label{tab:SingleSuccess}
\end{table}

It would appear that the success rate is not solely dependent on the number of dashes.  For example, of the 20 pattern classes with a single dash, the five which are not $(5,2)$-SC are all of block type $(2,2)$.  Of the classes with two or more dashes, the most successful block type is $(1,2,1)$ where the dashes do not follow one another.  

In the classical case, schemes were most successful when avoiding multiple patterns simultaneously.  Table \ref{tab:MultSuccess} lists the success rates for finding sets $B$ which are $(5,2)$-SC, for various $B\subseteq \S{2}\cup\S{3}\cup\S{4}\cup\S{5}$.  In the leftmost column, the ``set type'' of a set $B$ refers to the multiset $\{|\sigma|: \sigma\in B\}$.  

\begin{table}
	\centering
		\begin{tabular}{|r|r|r|r|}
		\hline
			Set type & \parbox{1.2 in}{Number of trivial\\ symmetry classes} & \parbox{1 in}{Number of\\ $(5,2)$-SC classes} & Percentage \\
			\hline
			\{2\}        &      2         &	   2& 100.0\% \\
			\{2, 2\}     &   		3					&	   3& 100.0\% \\
			\{2, 3\}     &     11         &	  11& 100.0\% \\
      \hline
			\{3\}        &      7         &	   7& 100.0\% \\
			\{3, 3\}     &     70         &	  68&  97.1\% \\
			\{3, 3, 3\}  &    358         &	 354&  98.9\% \\
			\hline
			\{4\}        &     55         &	  35&  63.6\% \\
		  \{4, 4\}     &   4624         &	1600&  34.6\% \\  
			\hline
			\{5\}        &   	479 				&	 144&  30.1\% \\
			\hline
			\{3, 4\}     &   	914   			&	 639&  69.9\% \\
			\{3, 5\}     &   7411         & 2465&  33.3\% \\
			\hline
		\end{tabular}
	\caption{Success rate for sets of patterns, $B$.}
	\label{tab:MultSuccess}
\end{table}

\subsection{Wilf-classification of Vincular Patterns}

We now present some preliminary Wilf-classification results based on the data generated by the schemes.  Two patterns $\sigma, \tau$ are said to be \emph{Wilf-equivalent} if $\sav{n}{\sigma} = \sav{n}{\tau}$ for all $n$, and we denote this $\sigma \we \tau$.  Claesson enumerates permutations avoiding a length 3 pattern with one dash in \cite{Claesson2001}, and Elizalde and Noy enumerate permutations avoiding length 3 patterns with no dashes (i.e., consecutive) in \cite{Elizalde2003}.  Thus we turn our attention to length 4 patterns.  All patterns of length 4 with finite schemes of depth at most 5 are listed in Table \ref{Length4}.  Solid black lines separate classes whose sequences are observed to diverge before the 31st term.

\begin{table}
	\centering
		\begin{tabular}{|c|p{2 in}|c|p{2 in}|}
		  \hline
			$\sigma$ & $\{\Sav{n}{\sigma}\}_{n}$ & OEIS \cite{OEIS}& Comments \\
			\hline
			123-4 & $1, 2, 6, 23, 108, 598, 3815, 27532,$ $221708, 197025, \ldots$ & A071076 & \\
			321-4 & & & $\equiv 123\d4$ by Proposition \ref{SergiPairs} \\
			\hline
			132-4 & $1, 2, 6, 23, 107, 585, 3671, 25986,$ $204738, 1776327, \ldots$ & A071075& \\
			231-4 & & & $\we 132\d4$ by Proposition \ref{SergiPairs}\\
			312-4 & & & $\we 132\d4$ by Proposition \ref{SergiPairs}\\
			213-4 & & & $\we 132\d4$ by Proposition \ref{SergiPairs}\\
			142-3 & & & $\we 132\d4$ by Conjecture \ref{ConjecturedPairs}\\
			241-3 & & & $\we 132\d4$ by Conjecture \ref{ConjecturedPairs}\\
			\hline
			124-3 & $1, 2, 6, 23, 107, 584, 3660, 25910,$ $204564, 1782520,\ldots$ & New & \\
			421-3 & & & $\we 124\d3$ by Conjecture \ref{ConjecturedPairs} \\
			\hline
			143-2 & $1, 2, 6, 23, 107, 582, 3622, 25369,$ $197523, 1692535,\ldots$ & New & \\
			\hline
			214-3 & $1, 2, 6, 23, 107, 583, 3637, 25548,$ $199506, 1714383,\ldots$ & New & \\
			\hline
			12-34 & $1, 2, 6, 23, 107, 585, 3669, 25932,$ $203768, 1761109,\ldots$ & A113226  & \\
			12-43 & & & $\we 12\d34$ by Proposition \ref{SergiPairs} \\
			21-43 & & & $\we 21\d43$ by Proposition \ref{SergiPairs} \\
			\hline
      1-24-3 & $1, 2, 6, 23, 104, 532, 3004, 18426,$ $121393, 851810,\ldots $ & A137538 & Wilf equivalent to $25\bar{1}34$ \\
      1-42-3 & & & $\we 1\d24\d3$ by Conjecture \ref{ConjecturedPairs}\\
      \hline
      1-23-4 & $1, 2, 6, 23, 105, 549, 3207, 20577,$ $143239, 1071704,\ldots$ & A113227 &  \\
      1-32-4 & & & $\we 1\d23\d4$ by Proposition \ref{SergiPairs} \\
      1-34-2 & & & $\we 1\d23\d4$ by Conjecture \ref{ConjecturedPairs}\\
      1-43-2 & & & $\we 1\d23\d4$ by Conjecture \ref{ConjecturedPairs}\\
      \hline
      12-3-4 & $1, 2, 6, 23, 105, 550, 3228, 20878,$ $146994, 1116000,\ldots$ & New & \\
      12-4-3 & & & $\we 12\d3\d4$ by Conjecture \ref{ConjecturedPairs}\\
      21-3-4 & & & $\we 12\d3\d4$ by Conjecture \ref{ConjecturedPairs}\\ 
      21-4-3 & & & $\we 12\d3\d4$ by Conjecture \ref{ConjecturedPairs}\\
      \hline
		\end{tabular}
	\caption{Dashed patterns of length 4 admitting schemes of depth at most 5}
	\label{Length4}
\end{table}

The first steps towards general results were taken in \cite{Elizalde2006, Kitaev2005}, which we summarize below in Proposition \ref{SergiPairs}:

\begin{proposition}[Elizalde \cite{Elizalde2006}, Kitaev \cite{Kitaev2005}]\label{SergiPairs}
 Suppose $\sigma, \tau$ are Wilf-equivalent consecutive patterns of length $k$.  Then the following are also Wilf-equivalent:
 \begin{itemize}
  \item $\sigma\d(k+1) \we \tau\d(k+1)$
  \item $\sigma\d(k+2)(k+1) \we \tau\d(k+2)(k+1)$
  \item $\sigma\d(k+2)(k+1) \we \sigma\d(k+1)(k+2)$
  \item $1\d(\sigma_1+1)(\sigma_2+1)\cdots(\sigma_k+1)\d(k+1) \we 1\d(\tau_1+1)(\tau_2+1)\cdots(\tau_k+1)\d(k+1)$
 \end{itemize}
\end{proposition}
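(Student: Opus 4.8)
These equivalences are due to Elizalde \cite{Elizalde2006} and Kitaev \cite{Kitaev2005}, and I would reprove them by a ``split at the global maximum'' decomposition that expresses each dashed-avoidance count as a convolution depending on $\sigma$ only through the consecutive-avoidance numbers $\sav{m}{(\sigma,[k-1])}$. Since the hypothesis that $\sigma\we\tau$ as consecutive patterns means exactly that $\sav{m}{(\sigma,[k-1])}=\sav{m}{(\tau,[k-1])}$ for all $m$, any such convolution yields the same sequence for $\sigma$ and $\tau$. The first step in every case is to characterize containment: a permutation contains $\sigma\d(k+1)$ iff some consecutive copy of $\sigma$ is followed at a later position by a strictly larger letter; it contains $\sigma\d(k+2)(k+1)$ iff some consecutive copy of $\sigma$ is followed by an adjacent descent whose two letters both exceed that copy's maximum; and it contains the pattern of the fourth bullet iff some consecutive copy of $\sigma$ is both preceded by a smaller letter and followed by a larger one.

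The first bullet admits a clean self-contained argument, which I would present in full. Writing $\pi=\alpha\, n\, \beta$ where $n$ is the largest letter, any consecutive copy of $\sigma$ inside $\alpha$ automatically has the larger letter $n$ to its right and so already produces a copy of $\sigma\d(k+1)$; a copy of $\sigma$ cannot use $n$ itself (nothing is larger), and every remaining copy must lie entirely inside $\beta$. Hence $\pi$ avoids $\sigma\d(k+1)$ iff $\alpha$ avoids consecutive-$\sigma$ and $\red(\beta)$ avoids $\sigma\d(k+1)$. Letting $A(x)$ be the exponential generating function for $\sigma\d(k+1)$-avoiders and $C(x)$ that for consecutive-$\sigma$-avoiders, this decomposition gives $A'(x)=C(x)\,A(x)$ with $A(0)=1$, so $A(x)=\exp\!\big(\int_0^x C\big)$ is determined entirely by $C$. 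As $C$ is identical for $\sigma$ and $\tau$, we conclude $\sav{n}{\sigma\d(k+1)}=\sav{n}{\tau\d(k+1)}$.

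For the second and fourth bullets I would run the same decomposition, but now the longer (or two-sided) tail creates ``cross-over'' copies whose consecutive $\sigma$-block and dashed tail straddle the global maximum $n$ --- for instance a $\sigma$-block in $\alpha$ completed by an adjacent large descent inside $\beta$. These break the clean product, so I would refine the decomposition by also recording a threshold value (the letter immediately after $n$ for the second bullet; the largest letter before, and the smallest letter after, the block for the fourth), producing a bivariate recurrence that still involves $\sigma$ only through the consecutive-avoidance numbers. The fourth bullet additionally uses reverse-complement symmetry to treat the prepended minimum exactly as the first bullet treats the appended maximum.

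The third bullet is the structural outlier: taking $\tau=\sigma$ shows it asserts, for \emph{any} consecutive pattern $\sigma$, that the two orderings of the two large tail letters are equinumerous, so it cannot follow from the consecutive hypothesis and needs its own argument. Here I would construct an order-reversing involution that flips the relative order of the large adjacent pair terminating each relevant copy, or equivalently show that the threshold-refined recurrences for the descent-tail and ascent-tail counts coincide. The main obstacle throughout is precisely this cross-over bookkeeping: ensuring the threshold-refined recurrences genuinely decouple $\sigma$ from the tail, and, for the third bullet, that the tail-reversing involution is well defined when several candidate large pairs overlap. This is exactly the delicate accounting carried out via the cluster and transfer-matrix methods of \cite{Elizalde2006, Kitaev2005}.
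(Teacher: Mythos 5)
The paper never proves this proposition: it is imported verbatim from Elizalde \cite{Elizalde2006} and Kitaev \cite{Kitaev2005}, so your attempt has to be judged on its own terms rather than against an internal argument. On those terms, your proof of the \emph{first} bullet is complete and correct. Writing $\pi=\alpha\,n\,\beta$ with $n$ the maximum, the $\sigma$-block of a copy of $\sigma\d(k+1)$ can neither contain nor straddle $n$, every consecutive copy of $\sigma$ inside $\alpha$ is completed by $n$, and every copy whose block lies in $\beta$ must be completed inside $\beta$; this gives $\sav{n}{\sigma\d(k+1)}=\sum_{j}\binom{n-1}{j}\,\sav{j}{(\sigma,[k-1])}\,\sav{n-1-j}{\sigma\d(k+1)}$, i.e. $A'(x)=C(x)A(x)$ and $A(x)=\exp\bigl(\int_0^x C(t)\,dt\bigr)$, which depends on $\sigma$ only through its consecutive-avoidance sequence. (A check: $\sigma=12$ gives $C(x)=e^x$ and $A(x)=e^{e^x-1}$, the Bell numbers, as expected for $12\d3$.)

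The other three bullets, however, are not proved; they are plans whose crucial step is assumed rather than verified. For the second bullet, avoidance of $\sigma\d(k+2)(k+1)$ does \emph{not} force $\alpha$ to avoid consecutive $\sigma$ (a block in $\alpha$ is harmful only if some later adjacent descent lies entirely above that block's maximum), so your threshold-refined recurrence must track, over \emph{all} permutations $\alpha$, a statistic such as the minimal block-maximum over consecutive copies of $\sigma$ in $\alpha$. The hypothesis $\sav{m}{(\sigma,[k-1])}=\sav{m}{(\tau,[k-1])}$ controls only the number of avoiders of each length; it says nothing about the distribution of that refined statistic on containers, so the claim that the refined recurrence ``involves $\sigma$ only through the consecutive-avoidance numbers'' is exactly the content of the theorem, not a consequence of the decomposition. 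Your closing remark that this bookkeeping ``is exactly the delicate accounting carried out via the cluster and transfer-matrix methods of \cite{Elizalde2006, Kitaev2005}'' makes the argument circular at precisely this point; the same objection applies to the two-sided thresholds of the fourth bullet. For the third bullet you correctly observe that it cannot follow from the consecutive hypothesis (take $\tau=\sigma$), but the promised tail-flipping involution is never defined, and the difficulty you yourself flag --- one large adjacent pair completing several blocks, or the flip creating or destroying other copies --- is exactly why naive involutions of this kind fail. So: the first equivalence is established; the remaining three are left open by your argument.
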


 It is clear from symmetries of the square that the following consecutive patterns are Wilf-equivalent:
 \begin{itemize}
  \item $12 \we 21$
  \item $123 \we 321$
  \item $132 \we 213 \we 231 \we 312$
 \end{itemize}
 Thus we see that Proposition \ref{SergiPairs} gets us many of the equivalences which appear in Table \ref{Length4}.  There remain many conjectured pairs, which we summarize below in Conjecture \ref{ConjecturedPairs}.  Note that in each case, the conjectured equivalence is confirmed computationally for permutations of length $n\leq 30$.

\begin{conjecture}\label{ConjecturedPairs}
 We have the following Wilf-equivalences:
 \begin{description}
  \item[(a)] $132\d4 \we 142\d3 \we 241\d3$      
  \item[(b)] $124\d3 \we 421\d3$      
  \item[(c)] $12\d3\d4 \we 12\d4\d3 \we 21\d3\d4 \we 21\d4\d3$  
  \item[(d)] $1\d24\d3 \we 1\d42\d3$  
  \item[(e)] $1\d43\d2 \we 1\d23\d4 \we 1\d34\d2 \we 1\d32\d4$ 
 \end{description}
\end{conjecture}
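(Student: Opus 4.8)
The plan is to split the conjecture according to the \emph{head} of each pattern and to lift a Wilf-equivalence of heads across the dashed tail. Within each of the five groups the patterns share a block type and have Wilf-equivalent heads: the heads in (a) are $132$ and $231$, in (b) are $123$ and $321$, and in (c) are $12$ and $21$, all equivalent as consecutive patterns; in (d) and (e) the heads are the length-three patterns $1\d23$ and $1\d32$, which Claesson~\cite{Claesson2001} shows are both counted by the Bell numbers, hence $1\d23\we 1\d32$. What is left unproven, and what goes beyond Proposition~\ref{SergiPairs}, is that the dashed tail may be placed \emph{interior} to the value range of the head (as in $142\d3$, $124\d3$, $1\d24\d3$) or reordered (as in the tails $\d3\d4$ versus $\d4\d3$ of (c)); in Proposition~\ref{SergiPairs} the appended letters are always the largest values and so attach to the head without disturbing it.

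The first route I would pursue is scheme-theoretic, exploiting that Section~\ref{sec:AutoDisc} produces an explicit finite scheme for each pattern. I would look for a \emph{scheme isomorphism}: a bijection between the triples $(p,G_p,R_p)$ of $E_\sigma$ and those of $E_\tau$ that matches gap-vector bases, reversibly-deletable sets, and the child/deletion structure governing the reading rules of Section~\ref{sec:formal}. Such a correspondence forces the two schemes to generate one and the same system of recurrences, whence $\sav{n}{\sigma}=\sav{n}{\tau}$ for every $n$ and the $n\le 30$ verification is upgraded to a theorem. The strongest candidates are pairs whose schemes have equal depth --- for instance the block-type $(2,1,1)$ patterns of (c) --- and when the depths differ one can instead extract the simplified recurrences the two schemes imply and test them for algebraic identity.

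The second route is bijective, for the groups where no scheme isomorphism is available. For (a) and (b) I would begin from the consecutive-pattern bijections realizing $132\we 231$ and $123\we 321$ (in the style of Elizalde--Noy~\cite{Elizalde2003} and Elizalde~\cite{Elizalde2006}) and try to promote them to maps $\Sav{n}{\sigma}\to\Sav{n}{\tau}$ that additionally preserve the order relations tying the head to the later interior letter, since an occurrence of the full pattern is detected by a consecutive copy of the head together with one inequality against a subsequent letter. For (d) and (e) the heads already correspond to set partitions via Claesson's Bell-number bijection, so I would instead try to enrich that bijection to record the single dashed tail letter, transporting avoidance through the set-partition model; for (c) the task is an involution on the positions of the two tail letters that fixes all head-occurrence data, in the spirit of the partially-ordered-pattern remark following Theorem~\ref{thm:BigClass}.

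The hard part is controlling the interior (or reordered) tail. When the tail is the maximum, as in the Proposition~\ref{SergiPairs} cases, it never participates in the intervening order relations and both the scheme structure and the head-bijection extend mechanically; an interior tail value instead couples the head to later letters through the values lying between them, so the ``trouble spots'' flagged by the two schemes need not align and a head-preserving map need not respect full avoidance. Reconciling two schemes of possibly different depth --- that is, proving they compute the same integer sequence for all $n$ rather than merely agreeing through $n=30$ --- is precisely the obstruction that keeps these statements conjectural.
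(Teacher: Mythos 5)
The statement you attempted is labeled a conjecture in the paper, and that label is accurate: the paper contains no proof of any of (a)--(e). Its only support is empirical --- the enumeration schemes for the patterns in each group produce identical sequences through $n=30$ --- together with the remark that (a) and (b) are proven in the first author's forthcoming paper \cite{Baxter2012umbral} and (c) in another forthcoming paper \cite{Baxter2012swe}, i.e., by techniques outside this paper. So there is no paper proof to measure your attempt against, and your proposal, as you yourself concede in its final paragraph, is not a proof either: both of your routes terminate at exactly the obstruction that keeps the statement conjectural. Your diagnosis of that obstruction is correct and matches the paper's implicit one: Proposition \ref{SergiPairs} only handles appended letters that are maximal in value, so they attach to the head without interacting with the letters lying between the head and the tail, whereas the patterns here have tails interior to the head's value range ($142\d3$, $124\d3$, $1\d24\d3$, $1\d34\d2$) or permuted tails ($\d3\d4$ versus $\d4\d3$), and no lifting of a head equivalence is known to survive that coupling. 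Your identification of the heads and of their Wilf-equivalences ($132\we231$, $123\we321$, $12\we21$ as consecutive patterns, and $1\d23\we1\d32$ via Claesson \cite{Claesson2001}) is also correct.

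One element of your plan is worth singling out as a genuine strengthening of what the paper actually does. Your ``scheme isomorphism'' route --- matching the triples $(p,G_p,R_p)$ of $E_\sigma$ and $E_\tau$ so that gap bases, reversibly deletable sets, and the child/deletion structure correspond, forcing the reading-rule recurrences of Section \ref{sec:formal} to coincide --- would, if carried out for a given pair, upgrade the paper's finite check to equality of $\sav{n}{\sigma}$ and $\sav{n}{\tau}$ for all $n$; the paper's verification is exactly this idea truncated to $n\le 30$. Be aware, though, that the paper's own data suggest the hypothesis of such an isomorphism will often fail: Table \ref{1234patterns} shows scheme depth varying erratically even among vincular patterns sharing the underlying permutation $1234$, and the authors explicitly note the absence of any structural pattern governing it, so there is no reason to expect the schemes within one conjectured group to be structurally identical. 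In short: there is no gap to charge you with relative to the paper, because the paper proves nothing here; but what you have written is a research program, not an argument, and you have correctly located the point at which it stalls.
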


The equivalences in (a) and (b) are proven in an upcoming paper by the first author \cite{Baxter2012umbral}.  The equivalences in (c) are proven in a different upcoming paper by the first author \cite{Baxter2012swe}.

\subsection{Refinement According to the Inversion Number}

In \cite{Baxter2010}, the first author demonstrates that schemes for permutations avoiding classical patterns can be adapted to compute the number of such permutations with $k$ inversions.  The same refinement applies to the schemes developed above, since they use the same deletion maps $d_R$.

As an example, consider the permutations avoiding $1\d32$.  In Proposition 3 of \cite{Claesson2001}, Claesson shows that the Bell numbers enumerate $1\d32$-avoiding permutations via a bijection, where the number of descents in the permutation is one more than the number of blocks in the corresponding set partition.  We now find the number of permutations with $k$ inversions avoiding $1\d32$, leading to a new refinement of the Bell numbers which is shown in Table \ref{Refined1-32}.  Further terms of the sequence can be generated via the enumeration scheme for $1\d32$ and applying the refinements discussed in \cite{Baxter2010}.\footnote{This refinement is implemented in the Maple package \textsc{gVatter} as \texttt{qMiklos}.}

\begin{table}
	\centering
		\begin{tabular}{r|rcccccccccc}
			      & $k=0$ & 1 & 2 & 3 & 4 & 5 & 6 & 7 & 8 & 9 & 10 \\ \hline
		 $n=0$ & 1 &  &  &  &  &  &  &  &  &  & \\
			   1 & 1 &  &  &  &  &  &  &  &  &  & \\
			   2 & 1 & 1&  &  &  &  &  &  &  &  & \\
			   3 & 1 & 1& 2& 1&  &  &  &  &  &  & \\
			   4 & 1 & 1& 2& 4& 3& 3& 1&  &  &  & \\
			   5 & 1 & 1& 2& 4& 7& 8& 9& 9& 6& 4& 1 \\
		\end{tabular}
	\caption{Number of permutations avoiding $1\dcap32$ of length $n$ with $k$ inversions}
	\label{Refined1-32}
\end{table}

It is interesting to note that if one continues this chart, the columns are each eventually constant.  Specifically, if $f(n,k)$ is the number of permutations avoiding $1\d32$ of length $n$ with $k$ inversions, then $f(n,k)=f(k+1,k)$ for all $n\geq k+1$.   The stagnation can be seen as follows.  
Suppose $\pi$ is a permutation with $k$ inversions and length $n\geq k+2$ which avoids $1\d32$.  We will show that $\pi$ must have $\pi_j = j$ for $j\geq k+2$, which implies $f(n,k) = f(k+1,k)$ for $n \geq k+1$.
First observe that $\pi$ must end with an ascending run, that is, $\pi_{k+1} < \pi_{k+2} < \dotsb < \pi_{n}$.  To see this, suppose for contradiction that $\pi_j > \pi_{j+1}$ for $j \geq k+1$.  Then $\pi_i > \pi_{j+1}$ for each $i<j$ or else $\pi_i \pi_j \pi_{j+1}$ forms a copy of $1\d32$, but this implies $\pi_{j+1}$ is involved in $\j\geq k+1$ inversions which contradicts the assumption that $\pi$ has only $k$ inversions total.  Next suppose $\pi_{j}<j$ for some $j\geq k+2$, and without loss of generality assume this is the minimal such $j$.  Then $\pi_{j-1} > \pi_j$, so $\pi_j =1$ or else $1\pi_{j-1}\pi_j$ forms a copy of $1\d32$.   This implies that $\pi_j$ is involved in $j-1 \geq k+1$ inversions, however, which contradicts our assumption.  Thus we have shown for each $k+2 \leq j \leq n$ that $j \leq \pi_{j} < \pi_{j+1}$ , and so it follows that $\pi_j = j$.

The sequence of $\bigl\{\lim\limits_{n\to \infty} f(n,k)\bigr\}_{k\geq0} $ is given by $1$, $1$, $2$, $4$, $7$, $13$, $22$, $38$, $63$, $105,\dotsc$, which was new to the OEIS \cite{OEIS} and has been added as A188920.  These numbers ought to describe set partitions where a certain statistic is $k$, but it is unclear whether such a statistic would be ``natural.''  It is interesting to note that when one considers the analogous question of permutations avoiding $1\d3\d2$ of length $n$ and $k$ inversions, one also sees that as $n\to\infty$ the number is eventually constant at the number of integer partitions of $k$.  See \cite{Furlinger1985} for a proof in terms of lattice paths or \cite{Claesson2011} for a more recent treatment.

\section{Conclusions and Future Directions}

This paper develops automatable methods to compute $\sav{n}{B}$ for many sets of vincular patterns $B$.  This was accomplished by extending the enumeration schemes developed by Vatter and Zeilberger in \cite{Zeilberger1998, Vatter2008, Zeilberger2006}.  The restrictions on adjacencies which vincular patterns present introduced complications when discovering gap vectors and reversibly deletable sets.   Theorem \ref{thm:GapTest} demonstrates that gap vectors can only be discovered when prefixes are long enough to contain a large portion of a vincular pattern.  Section \ref{AutoDiscRD} explains how the discovery of reversibly deletable sets requires two tests rather than one as well as the introduction of a ``null'' character.  The Maple implementation in \textsc{gVatter}\footnote{Available for download from the both authors' homepages.}, provides a practical tool to compute many terms of $\sav{n}{B}$ if $B$ admits a finite scheme of reasonable depth.

Despite the added complications, Theorem \ref{thm:BigClass} proves that any pattern set containing only consecutive patterns and patterns of the form $\sigma_1 \sigma_2 \cdots \sigma_{t}\d\sigma_{t+1}$ admits a finite scheme.  Hence enumeration schemes may be added to the list of methods to analyze problems in consecutive pattern avoidance.  Classical patterns admitting a finite scheme have not been classified, and there have been few results about infinite classes of patterns which admit finite schemes.

In Question 9.2 of \cite{Vatter2008}, Vatter asks ``Is every sequence produced by a finite enumeration scheme holonomic (i.e., $P$-recursive)?''  It is noted above that $23\d1$ has a finite scheme which produces the Bell numbers, and Sagan shows in \cite{Sagan2010} that the Bell numbers are not holonomic.  Hence a finite scheme can produce a sequence which is not holonomic, although it remains to be seen whether a finite scheme resulting from \emph{classical} patterns (the original context for Vatter's schemes) can yield a non-holonomic sequence.  It should be noted that there is no set $B$ of classical patterns for which $\sav{n}{B}$ is known to be non-holonomic, even when $B$ has no known finite enumeration scheme.

Enumeration schemes provide powerful tools for generating terms of the sequence $\sav{n}{B}$ for a broad class of sets $B$.  Thus far they have been developed for vincular patterns and barred patterns.  This project originated as an attempt to develop enumeration schemes for the bivincular patterns introduced in \cite{Bousquet2010}, but it quickly became apparent that the maps $d_R$ wreak havoc on vertical adjacencies among letters following the prefix and would be unsuitable.\footnote{Deleting a letter at the start of the permutation can create a vertical adjacency at the end of the permutation. For example, to contain the bivincular pattern $\sigma=(12,\{1\},\{1\})$, a permutation $\pi$ must have a subsequence $\pi_{i}\pi_{i+1}$ such that $\pi_{i}+1=\pi_{i+1}$.  The permutation $13524$ avoids $\sigma$ while $d_{\{2\}}(13524)=1423$ contains $\sigma$ in the last two letters.  }  A different recursive structure for $\S{n}$ would need to be exploited to make enumeration schemes work for bivincular patterns which have non-trivial adjacency requirements.

One may wonder whether these methods may be extended to compute the number of permutations of length $n$ admitting $r$ copies of a given pattern, as done in \cite{Noonan1996}.  The problem considered in this paper would be the $r=0$ case.  Since a given letter can be involved in multiple copies of a pattern, the existence of bijective deletion maps $d_R$ becomes far less likely for short prefixes however.  Thus it seems unlikely that the methods contained here will extend easily to the multiple-copies case.

In \cite{Pudwell2010a} the second author extends enumeration schemes to pattern avoidance by words, i.e., permutations of multisets.  The techniques above should extend in a straightforward manner to this case.  Similarly, schemes could be developed to handle permutations (or even words) avoiding barred vincular patterns by combining the techniques of this paper and \cite{Pudwell2010b}.  These extensions have yet to be implemented by computer.

\bibliographystyle{plain}
\bibliography{ESbib}

\begin{thebibliography}{10}

\bibitem{Babson2000}
Eric Babson and Einar Steingr{\'{\i}}msson.
\newblock Generalized permutation patterns and a classification of the
  {M}ahonian statistics.
\newblock {\em S\'em. Lothar. Combin.}, 44:Art. B44b, 18 pp. (electronic),
  2000.

\bibitem{Baxter2012umbral}
Andrew Baxter.
\newblock From enumeration schemes to functional equations.
\newblock {\em In preparation}.

\bibitem{Baxter2012swe}
Andrew Baxter.
\newblock Shape-{W}ilf-equivalence and vincular patterns.
\newblock {\em In preparation}.

\bibitem{Baxter2010}
Andrew Baxter.
\newblock Refining enumeration schemes to count according to the inversion
  number.
\newblock {\em Pure Mathematics and Applications}, 21(2):137--160, 2010.

\bibitem{Bousquet2010}
Mireille Bousquet-M{\'e}lou, Anders Claesson, Mark Dukes, and Sergey Kitaev.
\newblock (2+2)-free posets, ascent sequences and pattern avoiding
  permutations.
\newblock {\em Journal of Combinatorial Theory, Series A}, 117(7):884 -- 909,
  2010.

\bibitem{Chen2011}
William Y.~C. Chen and Lewis~H. Liu.
\newblock Permutation tableaux and the dashed permutation pattern 32--1.
\newblock {\em Electron. J. Combin.}, 18(1):Paper 111, 11, 2011.

\bibitem{Claesson2011}
A.~{Claesson}, V.~{Jel{\'{i}}nek}, and E.~{Steingr{\'{i}}msson}.
\newblock {Upper bounds for the Stanley-Wilf limit of 1324 and other layered
  patterns}.
\newblock {\em ArXiv e-prints}, November 2011.

\bibitem{Claesson2001}
Anders Claesson.
\newblock Generalized pattern avoidance.
\newblock {\em European J. Combin.}, 22(7):961--971, 2001.

\bibitem{Elizalde2006}
Sergi Elizalde.
\newblock Asymptotic enumeration of permutations avoiding generalized patterns.
\newblock {\em Adv. in Appl. Math.}, 36(2):138--155, 2006.

\bibitem{Elizalde2003}
Sergi Elizalde and Marc Noy.
\newblock Consecutive patterns in permutations.
\newblock {\em Adv. in Appl. Math.}, 30(1-2):110--125, 2003.
\newblock Formal power series and algebraic combinatorics (Scottsdale, AZ,
  2001).

\bibitem{Furlinger1985}
J.~F{\"u}rlinger and J.~Hofbauer.
\newblock {$q$}-{C}atalan numbers.
\newblock {\em J. Combin. Theory Ser. A}, 40(2):248--264, 1985.

\bibitem{Kitaev2003}
Sergey Kitaev.
\newblock Multi-avoidance of generalised patterns.
\newblock {\em Discrete Math.}, 260(1-3):89--100, 2003.

\bibitem{Kitaev2005}
Sergey Kitaev.
\newblock Partially ordered generalized patterns.
\newblock {\em Discrete Math.}, 298(1-3):212--229, 2005.

\bibitem{Noonan1996}
John Noonan.
\newblock The number of permutations containing exactly one increasing
  subsequence of length three.
\newblock {\em Discrete Math.}, 152(1-3):307--313, 1996.

\bibitem{A095149}
{OEIS Foundation Inc.}
\newblock The {O}n-{L}ine {E}ncyclopedia of {I}nteger {S}equences.
\newblock http://oeis.org/A095149, 2011.

\bibitem{OEIS}
{OEIS Foundation Inc.}
\newblock The {O}n-{L}ine {E}ncyclopedia of {I}nteger {S}equences.
\newblock http://oeis.org, 2011.

\bibitem{Pudwell2008}
Lara Pudwell.
\newblock Enumeration schemes for words avoiding patterns with repeated
  letters.
\newblock {\em Integers}, 8:A40, 19, 2008.

\bibitem{Pudwell2010b}
Lara Pudwell.
\newblock Enumeration schemes for permutations avoiding barred patterns.
\newblock {\em Electron. J. Combin.}, 17(1):Research Paper 29, 27, 2010.

\bibitem{Pudwell2010a}
Lara Pudwell.
\newblock Enumeration schemes for words avoiding permutations.
\newblock In {\em Permutation patterns}, volume 376 of {\em London Math. Soc.
  Lecture Note Ser.}, pages 193--211. Cambridge Univ. Press, Cambridge, 2010.

\bibitem{Sagan2010}
Bruce~E. Sagan.
\newblock Pattern avoidance in set partitions.
\newblock {\em Ars Combin.}, 94:79--96, 2010.

\bibitem{Steingrimsson2010Survey}
Einar Steingr\'{i}msson.
\newblock Generalized permutation patterns --- a short survey.
\newblock In Steve Linton, Nik Ru{\v{s}}kuc, and Vincent Vatter, editors, {\em
  Permutation Patterns, St Andrews 2007}, volume 376 of {\em London
  Mathematical Society Lecture Note Series}, pages 193--211. Cambridge
  University Press, 2010.

\bibitem{Vatter2008}
Vincent Vatter.
\newblock Enumeration schemes for restricted permutations.
\newblock {\em Combin. Probab. Comput.}, 17(1):137--159, 2008.

\bibitem{Zeilberger1998}
Doron Zeilberger.
\newblock Enumeration schemes and, more importantly, their automatic
  generation.
\newblock {\em Ann. Comb.}, 2(2):185--195, 1998.

\bibitem{Zeilberger2006}
Doron Zeilberger.
\newblock On {V}ince {V}atter's brilliant extension of {D}oron {Z}eilberger's
  enumeration schemes for {H}erb {W}ilf's classes.
\newblock Published in The Personal Journal of Ekhad and Zeilberger, 2006.

\end{thebibliography}

\end{document}